\documentclass[12pt,a4paper]{amsart}
\usepackage{amsfonts,amsmath,amssymb,amscd}
\usepackage{fancybox}
\usepackage[latin1]{inputenc}
\usepackage[usenames]{color}
\usepackage{graphicx}
\usepackage{enumerate}
\usepackage{subfig}
\usepackage{float}
\usepackage{fullpage}
\usepackage{epsfig,amssymb}

\theoremstyle{plain}
\newtheorem{thm}{Theorem}[section]
\newtheorem{cor}[thm]{Corollary}
\newtheorem{lem}[thm]{Lemma}
\newtheorem{prop}[thm]{Proposition}

\theoremstyle{definition}

\newtheorem{rem}[thm]{Remark}

\parskip5pt plus1pt

\widowpenalty 10000 \clubpenalty 10000 \skip\footins=2\baselineskip

\begin{document}
	
\title{Closed Biconservative Hypersurfaces in Spheres}

\author{S. Montaldo}
\address{Universit\`a degli Studi di Cagliari\\
Dipartimento di Matematica e Informatica\\
Via Ospedale 72\\
09124 Cagliari, Italia}
\email{montaldo@unica.it}

\author{C. Oniciuc}
\address{``Alexandru Ioan Cuza" University of Iasi\\
Faculty of Mathematics\\
Blvd. Carol I no. 11\\
Iasi 700506, Romania}
\email{oniciucc@uaic.ro}

\author{A. P\'ampano}
\address{Department of Mathematics and Statistics\\
Texas Tech University\\
79409, Lubbock, TX, USA}
\email{alvaro.pampano@ttu.edu}
\date{\today}

\begin{abstract} We characterise the profile curves of non-CMC biconservative rotational hypersurfaces of space forms $N^n(\rho)$ as $p$-elastic curves, for a suitable rational number $p\in[1/4,1)$ which depends on the dimension $n$ of the ambient space. Analysing the closure conditions of these $p$-elastic curves, we prove the existence of a discrete biparametric family of non-CMC closed (i.e., compact without boundary) biconservative hypersurfaces in $\mathbb{S}^n(\rho)$. None of these hypersurfaces can be embedded in $\mathbb{S}^n(\rho)$.
\end{abstract}

\thanks{}
\subjclass[2010]{}

\keywords{Biconservative Hypersurfaces, Closed Hypersurfaces, $p$-Elastic Curves, Rotational Hypersurfaces.}
\maketitle

\section{Introduction}

An oriented hypersurface $M^{n-1}$ immersed in a $n$-dimensional Riemannian space form $N^n(\rho)$ is called \emph{biconservative} if
\begin{equation}\label{eq:biconservative-general-formula-sf}
2S_\eta\left({\rm grad} H\right)+\left(n-1\right)H {\rm grad} H=0\,,
\end{equation}
where $\eta$ is a unit normal vector field, $S_\eta$ is the shape operator and $H={\rm trace} S_\eta/(n-1)$ is the mean curvature function. Biconservative immersions were first introduced in \cite{CMOP} as those immersions with conservative stress energy-tensor associated to the bienergy. Their study became soon quite intense and we refer the reader to the recent survey \cite{FO} and the references therein.

When $H$ is constant \eqref{eq:biconservative-general-formula-sf} trivially holds, thus the class of biconservative hypersurfaces includes that of constant mean curvature (CMC) hypersurfaces and, consequently, it can be considered as a geometric generalisation of the latter. To understand how much this class of hypersurfaces is larger than that of CMC ones, the main interest is to study non-CMC biconservative hypersurfaces.

When $n=3$, biconservative surfaces of $3$-dimensional Riemannian space forms $N^3(\rho)$ were locally classified in \cite{CMOP} and globally in \cite{Simona}. More precisely, it was proved that non-CMC biconservative surfaces must be rotational and a relation between the mean and Gaussian curvatures was stated. Later on, in \cite{MP}, taking advantage of these results, the authors gave a variational characterisation of profile curves of non-CMC biconservative surfaces. They showed that the profile curve is a $p$-elastic curve for $p=1/4$. We point out here that the variational problem associated to $p$-elastic curves is, a priori, unrelated to the energies usually studied in the theory of harmonic and biharmonic submanifolds. To the contrary, it is a classical problem whose energy can be seen as an extension of the bending energy for curves which gives rise to classical elastic curves. In a letter of 1738 from D. Bernoulli to L. Euler \cite{Tr}, both elastic and $p$-elastic curves were introduced and treated simultaneously.

In the same paper (\cite{MP}), using this variational characterisation and employing techniques arising from curvature energy problems, it was shown the existence of a discrete biparametric family of closed non-CMC biconservative surfaces in $\mathbb{S}^3(\rho)$, none of which were embedded. The question that naturally arises here is whether or not closed non-CMC biconservative hypersurfaces exist in higher dimension. This question was eloquently described and motivated as an open problem in the recent report \cite{FO} (the same open problem appears in \cite{FLO}). In the current paper we give an affirmative answer to this question and find a discrete biparametric family of closed non-CMC biconservative hypersurfaces in $\mathbb{S}^n(\rho)$, for every arbitrary dimension $n\geq 3$.

We first show, using results of \cite{DC-D}, that for $n\geq 3$ a non-CMC biconservative hypersurface with exactly two distinct principal curvatures of a space form $N^3(\rho)$  is rotational and, for this class of hypersurfaces, we  extend the variational characterisation for their profile curves. More precisely, we prove in Theorem \ref{char} that a non-CMC rotational hypersurface of $N^n(\rho)$ is biconservative if and only if its profile curve is a $p$-elastic curve where $p=(n-2)/(n+1)$ depends on the dimension of the ambient space.

Next, by analysing the phase space of the Euler-Lagrange equation for $p$-elastic curves, we conclude in Proposition \ref{periodic} that if closed $p$-elastic curves exist they must lie in $\mathbb{S}^2(\rho)$, in which case all $p$-elastic curves have periodic curvature, provided that they are defined on their maximal domain. In Theorem \ref{closed}, we prove the existence of a discrete biparametric family of closed $p$-elastic curves in $\mathbb{S}^2(\rho)$, independently of the value of $p$, i.e., regardless of the dimension of the ambient space $\mathbb{S}^n(\rho)$. In Figures \ref{Figure1} and \ref{Figure2} we show the two simplest closed $p$-elastic curves for several choices of the dimension $n$.

Finally, in Section \ref{5}, we rewrite these results in terms of the corresponding rotational hypersurfaces obtaining, in Theorem \ref{claimed}, the claimed result about the existence of a discrete biparametric family of non-CMC closed biconservative hypersurfaces in $\mathbb{S}^n(\rho)$.

\section{Rotational Biconservative Hypersurfaces}

Harmonic maps between Riemannian manifolds are the critical points of the energy functional. In \cite{ES}, Eells and Sampson suggested to study biharmonic maps, which are the critical points of the bienergy functional. The first variation formula of the bienergy was derived by Jiang (\cite{J1}), who also obtained the associated Euler-Lagrange equation. For details see \cite{FO} and references therein.

Denote by $\varphi:M^m\rightarrow N^n$ an isometric immersion between Riemannian manifolds of dimensions $m<n$. Then, the decomposition of the Euler-Lagrange equation associated to the bienergy with respect to its normal and tangent components was obtained with contributions of \cite{BMO,C,LM,O}. If $m=n-1$, we will say that $M^{n-1}$ is a \emph{hypersurface} of $N^n$ (throughout this paper all hypersurfaces are assumed to be oriented). In particular, this decomposition for hypersurfaces can be summarised in the following theorem.

\begin{thm}\label{decomposition} Let $\varphi:M^{n-1}\rightarrow N^n$ be an isometric immersion with mean curvature function $H$. Then, the normal and tangential components of the Euler-Lagrange equation associated to the bienergy are, respectively,
\begin{eqnarray*}
\Delta H+H \lvert S_\eta\rvert^2+H{\rm Ric}(\eta,\eta)&=&0\,,\\
2S_\eta\left({\rm grad} H\right)+(n-1)H {\rm grad} H-2H {\rm Ric}(\eta)^{\top}&=&0\,,
\end{eqnarray*}
where $\eta$ is the unit normal vector field, $S_\eta$ is the shape operator and ${\rm Ric}$ denotes the Ricci curvature of $N^n$. Here, the symbol $\Delta$ represents the rough Laplacian.
\end{thm}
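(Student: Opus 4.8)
The plan is to obtain both identities at once, by computing the bitension field of $\varphi$ and decomposing it into its components normal and tangent to $M^{n-1}$. Recall that $\varphi$ is biharmonic precisely when $\tau_2(\varphi)=0$, where
\[
\tau_2(\varphi)=-\Delta^\varphi\tau(\varphi)-\sum_{i=1}^{n-1}R^N\!\left(d\varphi(e_i),\tau(\varphi)\right)d\varphi(e_i)\,,
\]
$\{e_i\}_{i=1}^{n-1}$ is a local orthonormal frame on $M^{n-1}$, geodesic at the point under consideration, $\Delta^\varphi$ is the rough Laplacian on the pull-back bundle $\varphi^{-1}TN$, and $R^N$ is the curvature tensor of $N^n$. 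The starting observation is that, for an isometric immersion of a hypersurface, the tension field is normal and reads $\tau(\varphi)=(n-1)H\eta$, since the second fundamental form is $B(X,Y)=\langle S_\eta X,Y\rangle\,\eta$ and hence $\trace B=(n-1)H\eta$. Thus it suffices to evaluate $\Delta^\varphi(H\eta)$ and the curvature term on the section $H\eta$, and then to read off the $\eta$-component and the $TM^{n-1}$-component.

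First I would compute the rough Laplacian. Differentiating once and using the Weingarten formula $\nabla^\varphi_{X}\eta=-S_\eta(X)$ gives $\nabla^\varphi_{e_i}(H\eta)=(e_iH)\eta-H\,S_\eta(e_i)$, which is already split into a normal and a tangential summand. Differentiating a second time, now also invoking the Gauss formula $\nabla^\varphi_{X}Y=\nabla^M_{X}Y+\langle S_\eta X,Y\rangle\,\eta$ for $Y$ tangent, and tracing, the normal contributions assemble into $\Delta H+H\lvert S_\eta\rvert^2$ while the tangential contributions assemble into $2S_\eta(\grad H)+H\,\dive S_\eta$; here the coefficient $2$ arises from two equal tangential contributions, namely differentiating $\eta$ in the normal summand $(e_iH)\eta$ through Weingarten, and differentiating the coefficient $H$ in the tangential summand $-H\,S_\eta(e_i)$. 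The divergence $\dive S_\eta=\sum_i\nabla^M_{e_i}(S_\eta e_i)$ is the only tangential piece that still requires rewriting.

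Then I would treat the curvature term and convert $\dive S_\eta$. Since $\tau(\varphi)=(n-1)H\eta$, one has $\sum_i R^N(e_i,(n-1)H\eta)e_i=(n-1)H\sum_iR^N(e_i,\eta)e_i$, and $\sum_iR^N(e_i,\eta)e_i$ is, up to the chosen sign convention, the Ricci operator ${\rm Ric}(\eta)$ of $N^n$; its $\eta$-component produces the term $H\,{\rm Ric}(\eta,\eta)$ and its tangential component a term proportional to $H\,{\rm Ric}(\eta)^{\top}$. To obtain the coefficient $2$ in the tangential equation I would invoke the contracted Codazzi equation, which yields $\dive S_\eta=(n-1)\grad H-{\rm Ric}(\eta)^{\top}$; substituting this into $H\,\dive S_\eta$ turns the tangential part into $2S_\eta(\grad H)+(n-1)H\grad H$ together with a second copy of $H\,{\rm Ric}(\eta)^{\top}$, which combines with the curvature contribution to give $-2H\,{\rm Ric}(\eta)^{\top}$. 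Equating to zero the normal and tangential parts of $\tau_2(\varphi)$ then produces the two displayed equations.

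The main obstacle is the consistent sign bookkeeping throughout: one must fix the conventions for the rough Laplacian and for $R^N$ once and for all, and then track, in the second covariant derivative of $H\eta$, how the Gauss formula re-injects normal components (giving the $H\lvert S_\eta\rvert^2$ term) and how $\nabla^M$ acts on $S_\eta(e_i)$ (feeding the divergence term). The contracted Codazzi identity is the decisive ingredient, since it is what produces both the characteristic coefficient $(n-1)$ multiplying $H\grad H$ and the doubling of the tangential Ricci term. As a consistency check, in a space form one has ${\rm Ric}(\eta)^{\top}=0$ and ${\rm Ric}(\eta,\eta)=(n-1)\rho$, so the tangential identity reduces exactly to the biconservative equation \eqref{eq:biconservative-general-formula-sf} of the introduction.
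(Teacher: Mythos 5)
Your derivation is correct and is precisely the standard one: compute $\tau_2(\varphi)$ on $\tau(\varphi)=(n-1)H\eta$, split the second covariant derivative via Gauss--Weingarten, and use the traced Codazzi equation $\dive S_\eta=(n-1)\grad H-{\rm Ric}(\eta)^{\top}$ to produce the $(n-1)H\grad H$ term and the doubling of the tangential Ricci term. Note that the paper itself gives no proof of this theorem --- it is stated as a summary of known results with references to the literature --- so there is nothing to diverge from; your argument reproduces the derivation in those references, and your space-form consistency check confirms the signs come out right.
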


In order to interpret the tangential component of the Euler-Lagrange equation associated to the bienergy, we recall that, as described by Hilbert in \cite{H}, the stress-energy tensor associated with a variational problem is a symmetric $2$-covariant tensor $\mathcal{S}$ which is conservative at critical points, that is, ${\rm div}\mathcal{S}=0$ holds. For instance, for the energy functional the stress-energy tensor was studied in \cite{BE} and \cite{S}. 

If $\mathcal{S}$ is the stress-energy tensor for the bienergy, from the computations of \cite{J} and \cite{LMO}, it follows that for an isometric immersion $\varphi:M^{n-1}\rightarrow N^n$ the condition ${\rm div}\mathcal{S}=0$ reduces to
\begin{equation}\label{eq:biconservative-general-formula}
2S_\eta\left({\rm grad} H\right)+(n-1)H {\rm grad} H-2H {\rm Ric}(\eta)^{\top}=0\,,
\end{equation}
which is exactly the tangential component of the Euler-Lagrange equation associated to the bienergy. We then say that an isometric immersion is \emph{biconservative} if the stress-energy tensor for the bienergy is conservative, that is if  Equation~\eqref{eq:biconservative-general-formula} holds. A hypersurface $M^{n-1}$ immersed in this way is usually called a \emph{biconservative hypersurface}.

Let $N^n(\rho)$ be a $n$-dimensional Riemannian space form of constant sectional curvature $\rho$. If $\rho=0$, $N^n(\rho)=\mathbb{R}^n$ denotes the \emph{Euclidean space} of dimension $n$; when $\rho>0$, we have the $n$-dimensional \emph{round sphere} $\mathbb{S}^n(\rho)$; and, finally, for negative $\rho$ we recover the $n$-dimensional \emph{hyperbolic space}, simply denoted by $\mathbb{H}^n(\rho)$. In these particular ambient spaces, the tangential part of the Ricci curvature vanishes and, hence, a hypersurface immersed in $N^n(\rho)$ is biconservative if and only if Equation \eqref{eq:biconservative-general-formula-sf} holds.
As mentioned in the introduction, trivial examples of biconservative hypersurfaces of space forms are provided by constant mean curvature (CMC) hypersurfaces. Thus, our main interest is to investigate  non-CMC biconservative hypersurfaces. 

Let $M^{n-1}$ be a hypersurface with principal curvatures $\kappa_i$, $i=1,...,n-1$, i.e., the eigenvalue functions of the shape operator $S_\eta$. The functions $\kappa_{i}$ are continuous on $M^{n-1}$ for all $i=1,...,n-1$. The set of points where the number of distinct principal curvatures is locally constant is a set $M_{A}$ that is open and dense in $M^{n-1}$. On a non-empty connected component of $M_{A}$, which is open in $M_{A}$ and so in $M^{n-1}$, the number of distinct principal curvatures is constant. Thus, the multiplicities of the distinct principal curvatures are also constant, and so, on that connected component, the principal curvatures $\kappa_{i}$ are smooth and $S_\eta$ is (smoothly) locally diagonalizable (see \cite{Nomizu,Ryan1,Ryan2}). For a hypersurface $M^{n-1}$ with nowhere zero ${\rm grad} H$, the corresponding $M_A$ has no connected component made up only by umbilical points (that is, there, the number of distinct principal curvatures is constant one). In this setting, from \eqref{eq:biconservative-general-formula-sf}, it follows that a biconservative hypersurface $M^{n-1}$ with nowhere zero ${\rm grad}H$ has a principal curvature $\kappa_1=-(n-1)H/2$ of multiplicity one on each connected component of $M_A$ and, hence,
\begin{equation}\label{lw}
3\kappa_1+\kappa_2+\dots+\kappa_{n-1}=0
\end{equation}
holds.

Let $M^{n-1}$ be a (spherical) \emph{rotational} hypersurface of $N^n(\rho)$, that is, a hypersurface invariant by the orthogonal group $O(n-1)$ considered as a subgroup of isometries of the ambient space $N^n(\rho)$. The orbit of a point in $N^n(\rho)$ under the action $O(n-1)$ is a $(n-2)$-dimensional sphere. Consequently, the hypersurface $M^{n-1}$ can be described as the evolution of an arc length parametrized curve $\gamma(s)$ in $N^2(\rho)$ under the action of $O(n-1)$. We call $\gamma(s)=(x_1(s),x_2(s),x_3(s))\subset N^2(\rho)\subset\mathbb{R}^3$ the \emph{profile curve} of $M^{n-1}$.

From Proposition 3.2 of \cite{DC-D} we have that a rotational hypersurface has, at most, two distinct principal curvatures:
\begin{equation}\label{k1}
\kappa_1=\mu=\frac{x_1''+\rho x_1}{\sqrt{1-\rho x_1^2-(x_1')^2}}\,,
\end{equation}
which corresponds to minus the signed curvature $\kappa(s)$ of the profile curve $\gamma(s)$ (its associated eigenvector is parallel to ${\rm grad}H$); and
\begin{equation}\label{k2}
\kappa_2=\dots=\kappa_{n-1}=\lambda=-\frac{\sqrt{1-\rho x_1^2-(x_1')^2}}{x_1}\,,
\end{equation}
which has multiplicity, at least, $n-2$. Here, we are denoting by $\left(\,\right)'$ the derivative with respect to the arc length parameter $s$ of the profile curve $\gamma(s)$.

If a non-CMC biconservative hypersurface $M^{n-1}$ of a space form $N^n(\rho)$ has exactly two distinct principal curvatures then, from \eqref{lw}, the principal curvatures satisfy
\[
\kappa_2=\dots=\kappa_{n-1}\,,\quad \kappa_1=\frac{2-n}{3} k_2
\]
and, if $n\geq 4$, a result of Do Carmo--Dajczer \cite[Theorem 4.2]{DC-D} ensures that  $M^{n-1}$ is a rotational hypersurface. If $n=3$, Theorem~4.2 of \cite{DC-D} cannot be applied. Nevertheless, non-CMC biconservative surfaces are still rotational, as proved in \cite{CMOP}.
%

We sum up this in the following proposition.

\begin{prop} Let $M^{n-1}$ be a hypersurface of a space form $N^n(\rho)$ with nowhere zero ${\rm grad}H$ and with exactly two distinct principal curvatures everywhere. Then, $M^{n-1}$ is biconservative if and only if it is rotational and the principal curvatures $\mu$ (of multiplicity one) and $\lambda$ (of multiplicity $n-2$) satisfy
$$3\mu+\left(n-2\right)\lambda=0\,.$$
Recall that $\kappa=-\mu$ is the signed curvature of the profile curve $\gamma\subset N^2(\rho)$ of the hypersurface $M^{n-1}$.
\end{prop}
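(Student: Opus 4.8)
The plan is to establish the two implications separately, relying on the structural facts recorded immediately before the statement.

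I would begin with the converse, which is a direct computation. Assuming $M^{n-1}$ is rotational with principal curvatures $\mu$ of multiplicity one and $\lambda$ of multiplicity $n-2$ subject to $3\mu+(n-2)\lambda=0$, I would first compute the mean curvature $H=\left(\mu+(n-2)\lambda\right)/(n-1)$ and substitute $(n-2)\lambda=-3\mu$ to obtain $H=-2\mu/(n-1)$, that is $\mu=-(n-1)H/2$. The decisive geometric input is supplied by \eqref{k1}: for a rotational hypersurface the eigendirection of $\mu=\kappa_1$ is parallel to ${\rm grad}H$, whence $S_\eta({\rm grad}H)=\mu\,{\rm grad}H$. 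Inserting this into the left-hand side of \eqref{eq:biconservative-general-formula-sf} produces $\left(2\mu+(n-1)H\right){\rm grad}H$, which vanishes exactly because $\mu=-(n-1)H/2$. Thus $M^{n-1}$ is biconservative.

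For the forward direction I would reassemble the pieces already in place. Since ${\rm grad}H$ is nowhere zero and $M^{n-1}$ is biconservative, the argument leading to \eqref{lw} shows that $\kappa_1=-(n-1)H/2$ is a principal curvature of multiplicity one and that $3\kappa_1+\kappa_2+\dots+\kappa_{n-1}=0$. Invoking the hypothesis of exactly two distinct principal curvatures forces the second one, $\lambda$, to have multiplicity $n-2$, so that \eqref{lw} reduces to $3\mu+(n-2)\lambda=0$ after writing $\mu:=\kappa_1$. It then remains only to conclude rotationality; rewriting the relation as $\mu=\frac{2-n}{3}\lambda$ places the hypersurface under the hypotheses of \cite[Theorem 4.2]{DC-D} for $n\geq 4$, while the case $n=3$ is covered separately by \cite{CMOP}.

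I expect this last step to be the only real obstacle. The converse is elementary once the eigendirection of $\mu$ is identified with ${\rm grad}H$, and the bookkeeping of multiplicities is immediate from the two-principal-curvature assumption. The rotationality in the forward implication, by contrast, is not proved by hand here: it rests on the external classification of hypersurfaces whose principal curvatures satisfy the prescribed proportionality, together with the independent surface case. The care needed is to check that $\mu=\frac{2-n}{3}\lambda$ is precisely the relation required to apply \cite[Theorem 4.2]{DC-D}.
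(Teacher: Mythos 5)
Your proposal is correct and follows essentially the same route as the paper, which states this proposition only as a summary of the discussion preceding it: the forward implication is exactly the paper's argument (grad$H$ as eigenvector of $S_\eta$ with eigenvalue $-(n-1)H/2$ giving \eqref{lw}, then Do Carmo--Dajczer for $n\geq 4$ and \cite{CMOP} for $n=3$), and your converse is the natural computation left implicit there. The only addition you make is writing out the converse explicitly via $S_\eta(\mathrm{grad}\,H)=\mu\,\mathrm{grad}\,H$, which is a welcome clarification but not a different method.
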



\section{Variational Characterisation of Profile Curves}

In this section, we will characterise the profile curves of non-CMC rotational biconservative hypersurfaces $M^{n-1}$ as $p$-elastic curves, for a suitable rational number $p\equiv p(n)$ depending on the dimension of the ambient space $N^n(\rho)$. For this purpose, we briefly recall here the general theory for curvature energies (for more details see, for instance, \cite{AP} and references therein).

Consider the curvature energy functional 
$$\mathbf{\Theta}(\gamma)=\int_\gamma P(\kappa)\,,$$
where $P(\kappa)$ is a smooth function defined on an adequate domain, $\kappa$ is the curvature of $\gamma$ and $\mathbf{\Theta}$ is acting on the space of curves immersed in $N^2(\rho)$.

\begin{rem} The curvature energy functional $\mathbf{\Theta}$ acts on the space of curves isometrically immersed in $N^2(\rho)$ with the pullback metric, i.e.,
$$\mathbf{\Theta}(\gamma)=\int_\gamma P(\kappa)=\int_0^L P\left(\kappa(s)\right)ds=\int_0^1 P\left(\kappa(t)\right) v(t)\,dt\,,$$
where $L$ is the length of $\gamma$ and $v(t)=\lVert d\gamma(t)/dt\rVert$. In other words, the induced metric changes throughout the variation curves and, hence, $\mathbf{\Theta}$ is not the energy of the map, nor an extended notion. It represents a generalization of the classical bending energy for curves.
\end{rem}

Regardless of the boundary conditions, a curve critical for $\mathbf{\Theta}$ satisfies the following \emph{Euler-Lagrange equation}
\begin{equation}\label{EL}
\dot{P}_{ss}+\dot{P}\left(\kappa^2+\rho\right)-\kappa P=0\,.
\end{equation}
Here, $\dot{P}$ is the derivative of $P$ with respect to $\kappa$ and $s$ represents the arc length parameter of the curve. We are denoting with subindexes the derivatives with respect to the arc length parameter $s$. In particular, if the curvature $\kappa$ of the critical curve is nonconstant, equation \eqref{EL} can be integrated once. Indeed, by multiplying it by $\dot{P}_s$, we obtain an exact differential and so
\begin{equation}\label{fi}
\dot{P}_s^2+\left(\kappa\dot{P}-P\right)^2+\rho\dot{P}^2=d
\end{equation}
holds for a suitable constant of integration $d$. Observe that, in order to obtain \eqref{fi}, we are assuming $\dot{P}_s\neq 0$ which, since $\kappa$ is nonconstant means that $P(\kappa)\neq a\kappa+b$ for constants $a$, $b\in\mathbb{R}$. In order to obtain closed orbits, we are going to restrict the constant of integration $d$ to be positive (for details, cf. the explanations in \cite{DC-D} and \cite{AP}). This is always the case when $\rho\geq 0$ holds, i.e., in the Euclidean plane $\mathbb{R}^2$ and in the round sphere $\mathbb{S}^2(\rho)$.

Curves whose curvature is a solution of \eqref{fi} for $d>0$ can be characterised as follows.

\begin{prop}\label{charP} Let $d>0$ and $\gamma(s)\subset N^2(\rho)\subset \mathbb{R}^3$ be an arc length parametrized curve with nonconstant curvature $\kappa(s)$. Then, the function $\kappa(s)$ satisfies \eqref{fi} if and only if there exists a coordinate system such that $\gamma(s)=\left(x_1(s),x_2(s),x_3(s)\right)$ and
\begin{equation}\label{multiple}
x_1(s)=\frac{1}{\sqrt{d}}\,\dot{P}\left(\kappa(s)\right).
\end{equation}
\end{prop}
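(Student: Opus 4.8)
The strategy is to encode the Euler--Lagrange equation \eqref{EL} into a vector field along $\gamma$ which becomes constant in the ambient space exactly when $\gamma$ is critical; its conserved length will be the constant $d$ of \eqref{fi}, and one of its components will produce \eqref{multiple}. Regard $N^2(\rho)\subset\mathbb{R}^3$ with the flat metric of the appropriate signature, so that $\langle\gamma,\gamma\rangle=1/\rho$ for $\rho\neq0$, and let $\{T,N\}$ be the Frenet frame of $\gamma$ inside $N^2(\rho)$, satisfying
\begin{equation*}
\gamma_s=T,\qquad T_s=\kappa\,N-\rho\,\gamma,\qquad N_s=-\kappa\,T .
\end{equation*}
I would introduce the field
\begin{equation*}
\mathcal{I}=\dot{P}_s\,T+\left(P-\kappa\dot{P}\right)N+\rho\dot{P}\,\gamma ,
\end{equation*}
and check, using these structure equations together with $P_s=\dot{P}\,\kappa_s$, that the $N$- and $\gamma$-components of $\mathcal{I}_s$ vanish identically while its $T$-component equals the left-hand side of \eqref{EL}; hence $\mathcal{I}_s=0$ if and only if \eqref{EL} holds. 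Because $T,N$ are orthonormal and orthogonal to $\gamma$, a direct computation gives $\langle\mathcal{I},\mathcal{I}\rangle=\dot{P}_s^2+(\kappa\dot{P}-P)^2+\rho\dot{P}^2$, which is exactly the left-hand side of \eqref{fi}, and $\langle\mathcal{I},\gamma\rangle=\rho\dot{P}\,\langle\gamma,\gamma\rangle=\dot{P}$.

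For the direct implication, I would first observe that a nonconstant curvature satisfying \eqref{fi} automatically satisfies \eqref{EL}: differentiating \eqref{fi} and dividing by $2\dot{P}_s$, which is nonzero on a dense set since $\kappa$ is nonconstant, returns \eqref{EL}. Thus $\mathcal{I}$ is a constant vector of $\mathbb{R}^3$ with $\langle\mathcal{I},\mathcal{I}\rangle=d>0$. Choosing the coordinate system so that $\mathcal{I}=\sqrt{d}\,e_1$, where $e_1$ is the first coordinate vector, and pairing with $\gamma$, the two expressions $\langle\mathcal{I},\gamma\rangle=\dot{P}$ and $\langle\mathcal{I},\gamma\rangle=\sqrt{d}\,\langle e_1,\gamma\rangle=\sqrt{d}\,x_1$ yield precisely \eqref{multiple}.

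For the converse, I would start from a curve satisfying \eqref{multiple} and reconstruct the conservation law. Differentiating $x_1=\langle e_1,\gamma\rangle$ and using the structure equations gives $\langle e_1,T\rangle=x_{1,s}$ and $\langle e_1,N\rangle=(x_{1,ss}+\rho x_1)/\kappa$, together with the relation $\langle e_1,N\rangle_s=-\kappa\,x_{1,s}$ coming from $e_1$ being a constant vector; substituting $x_1=\dot{P}/\sqrt{d}$ and integrating identifies the normal coefficient with $(P-\kappa\dot{P})/\sqrt{d}$. Feeding this back, together with the normalisation $\langle e_1,e_1\rangle=1$ written as $x_{1,s}^2+\langle e_1,N\rangle^2+\rho x_1^2=1$, reproduces \eqref{fi} with the same constant $d$.

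The step I expect to be the main obstacle is the converse, and within it the control of the constant produced when integrating the normal coefficient: \eqref{multiple} determines $\langle e_1,N\rangle$ only up to an additive constant, and one must use that the $d$ of the normalisation $\langle e_1,e_1\rangle=1$ is the very same $d$ appearing in \eqref{multiple} in order to discard it and land exactly on \eqref{fi}. A secondary, bookkeeping difficulty is treating the three space forms uniformly; this is handled by working in $\mathbb{R}^3$ with the flat metric of the correct signature (Lorentzian when $\rho<0$) and tracking the sign of $\langle\gamma,\gamma\rangle=1/\rho$, the Euclidean case $\rho=0$ being the degenerate one in which the radial component of $\mathcal{I}$ disappears and \eqref{multiple} is recovered after one integration instead.
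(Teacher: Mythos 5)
Your forward implication is sound and is exactly the Killing-field computation the paper delegates to \cite{AP}: the field $\mathcal{I}=\dot{P}_s\,T+(P-\kappa\dot{P})N+\rho\dot{P}\,\gamma$ is parallel in $\mathbb{R}^3$ precisely when \eqref{EL} holds, \eqref{fi} implies \eqref{EL} by differentiation on the dense set where $\dot{P}_s\neq 0$, and normalising $\mathcal{I}=\sqrt{d}\,e_1$ together with $\langle\mathcal{I},\gamma\rangle=\dot{P}$ gives \eqref{multiple}. That half needs no changes.

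The converse, however, has a genuine gap at exactly the step you flagged, and the fix you propose does not close it. Setting $b=\langle e_1,N\rangle$, the relation $b_s=-\kappa\,x_{1,s}$ combined with $x_1=\dot{P}/\sqrt{d}$ only determines $b=(P-\kappa\dot{P})/\sqrt{d}+c$ for an undetermined constant $c$, and the normalisation $x_{1,s}^2+b^2+\rho x_1^2=1$ then yields
\[
\dot{P}_s^2+\bigl(\kappa\dot{P}-P-\sqrt{d}\,c\bigr)^2+\rho\dot{P}^2=d\,,
\]
which is \eqref{fi} for the translated Lagrangian $P+\sqrt{d}\,c$, not for $P$: expanding the square, the discrepancy $2\sqrt{d}\,c\,(\kappa\dot{P}-P)$ is nonconstant whenever $c\neq 0$ (for $P=\kappa^p$ it equals $2\sqrt{d}\,c\,(p-1)\kappa^p$), so the fact that the same $d$ appears on both sides does not force $c=0$. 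The obstruction is real, not a bookkeeping artifact: a critical curve of $\int_\gamma\left(P(\kappa)+\lambda\right)$ with $\lambda\neq 0$ has parallel field $\dot{P}_s\,T+(P+\lambda-\kappa\dot{P})N+\rho\dot{P}\,\gamma$ of some squared length $d$, hence satisfies \eqref{multiple} for that $d$ and that same $\dot{P}$, yet its curvature satisfies \eqref{fi} only with $P$ replaced by $P+\lambda$ (and for no choice of the constant on the right-hand side with $P$ itself). So your argument proves the statement only up to an additive constant in $P$. Note that the paper's converse takes a different route entirely, matching the curvature forced by \eqref{multiple} against the explicit parametrisations of critical curves in \cite{AP} and invoking the Fundamental Theorem of Planar Curves; to salvage your more self-contained computation you would either need an additional hypothesis pinning down $c$, or to weaken the conclusion to ``\eqref{fi} holds for $P$ plus a constant,'' which is in fact all that the proof of Theorem \ref{char} uses, since only $\dot{P}$ enters the principal curvatures.
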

\begin{proof} The forward implication follows from standard computations involving Killing vector fields along curves. For details see, for instance, \cite{AP}.

Now, for the converse assume that an arc length parametrized curve $\gamma(s)$ satisfies \eqref{multiple}. Then, from this relation, we obtain that the curvature $\kappa(s)$ of $\gamma(s)$ locally coincides with the nonconstant curvature of a critical curve for $\mathbf{\Theta}$, whose parametrizations can be found in \cite{AP}. Therefore, by the Fundamental Theorem of Planar Curves, both curves are the same, up to rigid motions, and so $\gamma(s)$ is also critical for $\mathbf{\Theta}$ with nonconstant curvature. That is, its curvature satisfies \eqref{fi}.
\end{proof}

We are now in the right position to prove the main result of this section.

\begin{thm}\label{char} Let $M^{n-1}$ be a non-CMC rotational hypersurface in $N^n(\rho)$. Then, locally, $M^{n-1}$ is biconservative if and only if its profile curve $\gamma(s)\subset N^2(\rho)$ satisfies the Euler-Lagrange equation associated to the curvature energy functional
$$\mathbf{\Theta}_p(\gamma)=\int_\gamma \kappa^p\,,$$
where $p=(n-2)/(n+1)\in [1/4,1)$.
\end{thm}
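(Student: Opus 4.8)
The plan is to treat the two characterisations — biconservativity and being a critical point of $\mathbf{\Theta}_p$ — as two descriptions of the same profile curve, and to pass between them through the relation \eqref{multiple} of Proposition~\ref{charP}. Since $M^{n-1}$ is rotational, its principal curvatures are given by \eqref{k1} and \eqref{k2}; since it is non-CMC, the curvature $\kappa=-\mu$ of the profile is nonconstant, which is precisely the regime covered by Proposition~\ref{charP} and by the passage from \eqref{EL} to its first integral \eqref{fi}. Working locally on the open dense set where $\mathrm{grad}\,H\neq 0$ and the two principal curvatures are distinct, the Proposition at the end of Section~2 reduces biconservativity to the single algebraic relation $3\mu+(n-2)\lambda=0$. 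Everything therefore comes down to showing that, for a nonconstant-curvature profile, this relation is equivalent to \eqref{multiple} with $P(\kappa)=\kappa^p$ and $p=(n-2)/(n+1)$.

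For the direct implication I would substitute \eqref{k1} and \eqref{k2} into $3\mu+(n-2)\lambda=0$. Writing $w=\sqrt{1-\rho x_1^2-(x_1')^2}$, the condition becomes $3x_1(x_1''+\rho x_1)=(n-2)w^2$, which (after fixing the unit normal so that the profile curvature $\kappa$ is positive, i.e. on the admissible domain of $\mathbf{\Theta}_p$) is the same as $\kappa=\frac{(n-2)}{3}\,\frac{w}{x_1}$. Differentiating $w^2=1-\rho x_1^2-(x_1')^2$ and feeding in this relation, a short computation shows that $w\,x_1^{(n-2)/3}$ is constant along $\gamma$; replacing $w$ by $\tfrac{3}{n-2}\kappa x_1$ turns this first integral into $\kappa\,x_1^{(n+1)/3}=\mathrm{const}$, that is $x_1=C\,\kappa^{-3/(n+1)}=C\,\kappa^{p-1}$ with $p=(n-2)/(n+1)$. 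Since $\kappa>0$ the constant $C$ is positive, so $C=p/\sqrt{d}$ for some $d>0$ and $x_1=\frac{1}{\sqrt d}\,\dot P(\kappa)$ with $P=\kappa^p$. By the converse part of Proposition~\ref{charP}, $\gamma$ is then a critical curve of $\mathbf{\Theta}_p$ with nonconstant curvature, i.e. it solves \eqref{EL}.

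For the converse, assume the profile solves \eqref{EL} for $\mathbf{\Theta}_p$. Because $M^{n-1}$ is non-CMC, $\kappa$ is nonconstant, so \eqref{EL} integrates to \eqref{fi} for some constant $d$, which we take positive (this is automatic for $\rho\geq 0$, in particular in $\mathbb{S}^n(\rho)$). The direct part of Proposition~\ref{charP} gives $x_1=\frac{1}{\sqrt d}\dot P=\frac{p}{\sqrt d}\kappa^{p-1}$. Substituting this $x_1$ together with $P=\kappa^p$ into \eqref{fi} and comparing with $w^2=1-\rho x_1^2-(x_1')^2$ yields $w^2=\frac{(p-1)^2}{d}\kappa^{2p}$, whence $\frac{w}{x_1}=\frac{|p-1|}{p}\kappa=\frac{3}{n-2}\kappa$, i.e. exactly $\kappa=\frac{(n-2)}{3}\frac{w}{x_1}$, which is the biconservativity relation $3\mu+(n-2)\lambda=0$. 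By the Proposition of Section~2, $M^{n-1}$ is biconservative. Finally, $p=(n-2)/(n+1)$ is increasing in $n$, equals $1/4$ at $n=3$ (recovering \cite{MP}) and tends to $1$, so $p\in[1/4,1)$.

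The main obstacle is the heart of the direct implication: extracting from the second-order biconservativity condition the correct first integral $w\,x_1^{(n-2)/3}=\mathrm{const}$ and recognising the resulting first-order relation as precisely the profile \eqref{multiple} furnished by Proposition~\ref{charP} for the exponent $p=(n-2)/(n+1)$. The subtleties to watch are the sign/orientation bookkeeping (the conventions $\kappa=-\mu$ and $\lambda=-w/x_1$ must be reconciled with the requirement $\kappa>0$ on the admissible domain of $\mathbf{\Theta}_p$) and the positivity of the integration constant $d$, which is what allows Proposition~\ref{charP} to be invoked; the reduction to the locus where $\mathrm{grad}\,H\neq0$ and the two principal curvatures are distinct is what the word ``locally'' in the statement absorbs.
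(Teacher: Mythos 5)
Your argument is correct and reaches the same pivot point as the paper --- the identity $x_1=\dot P(\kappa)/\sqrt{d}$ with $P=\kappa^p$, fed into Proposition~\ref{charP} --- but the forward implication is organised quite differently. The paper first observes that, since $\kappa$ is nonconstant, one may \emph{define} a function $P$ by declaring $x_1(s)=\dot P(\kappa)/\sqrt{d}$ (Inverse Function Theorem), so that by Proposition~\ref{charP} the profile of \emph{any} such rotational hypersurface satisfies \eqref{EL} and \eqref{fi} for this unknown $P$; substituting \eqref{k1}--\eqref{k2} into $3\mu+(n-2)\lambda=0$ and simplifying with \eqref{EL} and \eqref{fi} then produces the first-order ODE $(n+1)\kappa\dot P=(n-2)P$, whose solution $P=c\,\kappa^{(n-2)/(n+1)}$ identifies the functional. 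You instead integrate the biconservativity condition directly in the arc-length variable: differentiating $w^2=1-\rho x_1^2-(x_1')^2$ and using $3x_1(x_1''+\rho x_1)=(n-2)w^2$ yields the conserved quantity $w\,x_1^{(n-2)/3}$, hence $\kappa\,x_1^{(n+1)/3}=\mathrm{const}$ and $x_1=C\kappa^{p-1}$, after which Proposition~\ref{charP} closes the argument. Your route is more elementary and exhibits an explicit, geometrically meaningful first integral; the paper's route is more systematic in that it shows the profile of any non-CMC rotational hypersurface is critical for \emph{some} curvature energy and lets the biconservativity relation select $P$. Your converse is essentially the paper's. Two minor points: the sign bookkeeping you flag ($\kappa=-\mu$, $\lambda=-w/x_1$, versus the requirement $\kappa>0$ in the domain of $\kappa^p$) is genuinely present but is handled no more carefully in the paper itself, where it is absorbed into the choice of normal and the factors $\sqrt{(\kappa\dot P-P)^2}$; and in the converse you ``take'' $d>0$, whereas $d$ is determined by the curve --- for $\rho<0$ the paper justifies $d>0$ by appealing to the rotational invariance of the hypersurface (citing \cite{DC-D,P}) rather than by fiat, so you should do the same if you want the statement in full generality and not only for $\rho\geq 0$.
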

\begin{proof} Let $M^{n-1}$ be a non-CMC biconservative rotational hypersurface, then the relation \eqref{lw} holds between the principal curvatures of $M^{n-1}$, i.e., $\mu$ and $\lambda$, where $-\mu$ represents the curvature of the profile curve.

Moreover, since $M^{n-1}$ has nonconstant mean curvature, then the curvature of its profile curve $\gamma$ is also nonconstant. Therefore, locally, by the Inverse Function Theorem we can suppose that the arc length parameter of $\gamma$, $s$, is a function of the curvature and, hence, $x_1(s)=\dot{P}(\kappa)/\sqrt{d}$ for a suitable function $\dot{P}(\kappa)$ and constant $d>0$. This proves, applying Proposition \ref{charP} that the curvature of $\gamma$ locally satisfies \eqref{fi} and so, also \eqref{EL}.

Next, we use the definitions of the principal curvatures, \eqref{k1} and \eqref{k2}, in \eqref{lw} together with $x_1(s)=\dot{P}(\kappa)/\sqrt{d}$, to obtain that
\begin{eqnarray*}
3\mu+\left(n-2\right)\lambda&=&3\frac{\dot{P}_{ss}+\rho\dot{P}}{\sqrt{d-\rho\dot{P}^2-\dot{P}_s^2}}+\left(2-n\right)\frac{\sqrt{d-\rho\dot{P}^2-\dot{P}_s^2}}{\dot{P}}\\&=&3\kappa\frac{P-\kappa\dot{P}}{\sqrt{\left(\kappa\dot{P}-P\right)^2}}+\left(2-n\right)\frac{\sqrt{\left(\kappa\dot{P}-P\right)^2}}{\dot{P}}=0\,,
\end{eqnarray*}
where in the last line we have used the equations \eqref{EL} and \eqref{fi} to simplify the expression. This represents an ODE in $P(\kappa)$ which can be explicitly solved obtaining
$$P(\kappa)=c\kappa^{(n-2)/(n+1)}\,,$$
for some constant of integration $c$. Finally, observe that any multiple of an energy gives rise to the same variational problem and so we may assume $c=1$, concluding with the statement.

Conversely, assume that the profile curve $\gamma$ of a non-CMC rotational hypersurface $M^{n-1}$ satisfies the Euler-Lagrange equation associated to $\mathbf{\Theta}_p$. Then, since its curvature cannot be constant, it also satisfies \eqref{fi} for $P(\kappa)=\kappa^p$ and a suitable $d>0$ (the fact that $d>0$ follows from the rotational invariance of the hypersurface, \cite{DC-D,P}). It follows from Proposition \ref{charP} that there exists a coordinate system in which
$$x_1(s)=\frac{p}{\sqrt{d}}\,\kappa^{p-1}\,.$$
Using this in the principal curvatures $\mu$, \eqref{k1}, and $\lambda$, \eqref{k2}, a similar computation as above shows that \eqref{lw} is satisfied, and so is \eqref{eq:biconservative-general-formula-sf}. This finishes the proof.
\end{proof}

\begin{rem} The case $n=3$ corresponds with $p=1/4$ and the above result was obtained in \cite{MP}. When $n=5$, $p=1/2$ and the associated Euler-Lagrange equation coincides with the ODE obtained in \cite{FHYZ} when studying biconservative hypersurfaces with constant scalar curvature. We point out here that among non-CMC rotational biconservative hypersurfaces, the only ones with constant scalar curvature appear when the dimension is $n=5$. Indeed, the scalar curvature of the rotational hypersurface $M^{n-1}$ of $N^n(\rho)$ is
$$R=\left(n-1\right)\left(n-2\right)\rho+4\mu^2-{\rm trace}\,S_\eta^2=\left(n-1\right)\left(n-2\right)\rho+3\mu^2-\left(n-2\right)\lambda^2\,,$$
which combined with \eqref{lw} shows that $R$ is constant if and only if $n=5$ because for non-CMC rotational biconservative hypersurfaces the principal curvatures $\mu$ and $\lambda$ are not constant.
\end{rem}

Regardless of the boundary conditions, throughout this paper we will call \emph{$p$-elastic curves}  those curves whose curvature satisfies the Euler-Lagrange equation associated to the energy $\mathbf{\Theta}_p$. They were first introduced by D. Bernoulli in a letter to L. Euler of 1738, \cite{Tr}. Since then, these planar curves immersed in a space form $N^2(\rho)$ have been widely studied in the literature. For instance, in \cite{LP} a geometric description of these curves in $\mathbb{R}^2$ was given, proving that $p$-elastic curves are of catenary-type (observe that this description in combination with Theorem \ref{char} coincides with the result of \cite{MOR} for rotational biconservative hypersurfaces of $\mathbb{R}^n$). In particular, when $p=1/2$ the energy $\mathbf{\Theta}_{1/2}$ for curves in  $\mathbb{R}^2$ was studied by W. Blaschke, in \cite{B}, obtaining  that critical curves are catenaries. Some results about $1/2$-elastic curves in $\mathbb{S}^2(\rho)$ can be found in \cite{AGM,AGP} and about $p$-elastic curves in $\mathbb{H}^2(\rho)$ in \cite{ABG}.

If the curvature of the $p$-elastic curve $\gamma$ is constant, then the rotational hypersurface $M^{n-1}$ has constant mean curvature. Thus, we are interested on critical curves with nonconstant curvature. In this case, the first integral of the Euler-Lagrange equation \eqref{fi} reads
\begin{equation}\label{fip}
\kappa_s^2=\frac{\kappa^2}{p^2\left(1-p\right)^2}\left(d\kappa^{2(1-p)}-\left[1-p\right]^2\kappa^2-\rho \,p^2\right).
\end{equation}
For fixed dimension $n\geq 3$, $p=(n-2)/(n+1)\in [1/4,1)$ is also fixed and solutions of \eqref{fip} will depend on two real parameters, namely $d>0$ and a constant of integration arising after integrating \eqref{fip}. Nevertheless, this second constant can be assumed to be zero after translating the origin of the arc length parameter, if necessary. Consequently, there is a (real) one-parameter family of curvatures $\kappa_d(s)$ solving \eqref{fip} and, hence, by the Fundamental Theorem of Planar Curves we have, up to rigid motions, a one-parameter family of $p$-elastic curves $\gamma_d$ in $N^2(\rho)$.

\begin{rem} For some particular choices of $p$, it may be possible to obtain explicitly the curvatures $\kappa_d(s)$ solutions of \eqref{fip}. One such a case corresponds to $p=1/2$ (see \cite{AGP}, where the curvatures were given in terms of trigonometric functions).
\end{rem}

\section{Closed Profile Curves}

In this section we will prove the existence of a discrete biparametric family of closed $p$-elastic curves with nonconstant curvature in the round sphere $\mathbb{S}^2(\rho)$.

\subsection{Periodic Curvature}

First observe that a necessary, but not sufficient, condition for a curve to be closed is to have periodic curvature. For $\rho\leq 0$ we will show in Proposition~\ref{periodic} that $p$-elastic curves $\gamma_d$ with $d>0$ do not have periodic curvature. To the contrary, when $N^2(\rho)=\mathbb{S}^2(\rho)$ and when the curves are defined on their maximal domain, $p$-elastic curves $\gamma_d$ have always periodic curvature. To prove this, it is convenient to rewrite \eqref{fip} in terms of a new variable $u>0$, given by $\kappa^{2(1-p)}=u^3$, as
\begin{equation}\label{Q}
u_s^2=\frac{4u^2}{9p^2}\left(-\left[1-p\right]^2u^{3/(1-p)}+d u^3-\rho\, p^2\right)=\frac{4u^2}{9p^2}Q(u)\,.
\end{equation}
Note that $3/(1-p)=n+1\in\mathbb{N}$ and so $Q(u)$ is a polynomial of degree $n+1\geq 4$. Analysing the positive roots of this polynomial we conclude with the following result.

\begin{prop}\label{periodic} Let $\gamma_d$ with $d>0$ be a $p$-elastic curve with nonconstant curvature in $N^2(\rho)$ defined on its maximal domain. Then, the curvature of $\gamma_d$ is a periodic function if and only if $N^2(\rho)=\mathbb{S}^2(\rho)$.
\end{prop}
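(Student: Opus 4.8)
The plan is to analyze the sign structure and root configuration of the polynomial
\[
Q(u)=-\left(1-p\right)^2u^{3/(1-p)}+d\,u^3-\rho\,p^2
\]
on the half-line $u>0$, and to translate the behaviour of $Q$ into statements about whether the orbit $u(s)$ of the first-order equation \eqref{Q} is bounded away from both $0$ and $\infty$ (forcing periodicity) or escapes to a boundary of the phase line (precluding periodicity). The key observation to set up first is that, since $\kappa^{2(1-p)}=u^3$ and $u>0$, the curvature $\kappa$ is a strictly monotone function of $u$, so $\kappa$ is periodic if and only if $u$ is periodic; hence it suffices to work entirely with $u$ and $Q(u)$. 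Because $3/(1-p)=n+1\ge 4$ is an integer, $Q$ is a genuine polynomial of degree $n+1$, with a negative leading coefficient $-(1-p)^2$, so $Q(u)\to-\infty$ as $u\to+\infty$; its constant term is $-\rho\,p^2$, whose sign is exactly the sign of $-\rho$. This is where the trichotomy in $\rho$ enters.

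The main steps are then as follows. First I would treat $\rho>0$, i.e. the sphere case $\mathbb{S}^2(\rho)$, and show $u$ is periodic. Here $Q(0)=-\rho\,p^2<0$ and $Q(+\infty)=-\infty$, while one checks (say by Descartes' rule of signs applied to the coefficient sequence of $Q$, which exhibits exactly one sign change from $+d$ to $-(1-p)^2$ together with the negative constant) that $Q$ has a controlled number of positive roots; the relevant point is that for $d>0$ the polynomial becomes positive on some interval, so there exist exactly two simple positive roots $u_1<u_2$ with $Q>0$ on $(u_1,u_2)$ and $Q<0$ outside. By \eqref{Q}, the admissible motion of $u$ (where $u_s^2\ge 0$) is confined to the compact interval $[u_1,u_2]$, bounded strictly away from $0$ and $\infty$, with $u_s$ vanishing simply at the two endpoints; this is precisely the configuration of a nondegenerate turning-point problem, so on the maximal domain $u(s)$ oscillates periodically between $u_1$ and $u_2$. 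Second, for $\rho\le 0$ I would show periodicity fails: here $Q(0)=-\rho\,p^2\ge 0$ and the coefficient pattern yields at most one positive root, so the admissible region is an unbounded interval of the form $(0,u_2]$ or $[u_0,\infty)$ with a boundary at $u=0$ (equivalently $\kappa\to 0$) or at $u\to\infty$ that the orbit reaches, either in finite arc length or asymptotically; in either case $u$ is monotone on the maximal domain rather than oscillatory, and so its curvature cannot be periodic. The case $\rho=0$ should be handled together with $\rho<0$ since $Q(0)=0$ forces $u=0$ to be a root and the same monotonicity conclusion holds.

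The main obstacle I expect is the careful root-counting for $Q$ on $u>0$ and, more delicately, verifying that the orbit genuinely attains the finite endpoint $u=0$ or runs off to $u=+\infty$ in the $\rho\le 0$ case \emph{on its maximal domain}, rather than merely having such a root available; one must rule out the possibility of a second positive root that would recreate a bounded oscillation interval. Descartes' rule only bounds the number of positive roots from above, so to pin the configuration exactly I would combine it with the boundary values $Q(0)$ and $Q(+\infty)$ and with a monotonicity/convexity analysis of $Q$ (or of $Q'$) to locate the unique interior maximum and thereby count the sign changes precisely as a function of $\rho$ and $d$. Once the root configuration is fixed, the dynamical conclusion — bounded oscillation between two simple turning points gives a periodic solution, while an orbit reaching $0$ or $\infty$ does not — is standard for an equation of the form $u_s^2=F(u)$ with $F$ vanishing simply at the turning points, so the emphasis of the proof should rest on the algebraic analysis of $Q$.
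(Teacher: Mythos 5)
Your proposal is correct in substance and rests on the same central computation as the paper: counting the positive roots of $Q$ via Descartes' rule of signs, distinguishing $\rho\le 0$ (one sign change, hence a single positive root and no bounded oscillation interval) from $\rho>0$ (two positive roots trapping the orbit). Where you diverge is in the final dynamical step: the paper passes to the phase-plane variables $x=\sqrt{u}$, $y=x_s$, views $(x(s),y(s))$ as a bounded integral curve of a zero-free smooth vector field on $\{x>0\}$, and invokes the Poincar\'e--Bendixson theorem, whereas you use the classical turning-point argument for $u_s^2=F(u)$ with $F$ vanishing simply at the two endpoints. Both are legitimate; your route is more elementary, while the paper's phase-plane formulation also delivers, as a byproduct, that the maximal domain is all of $\mathbb{R}$ (which the statement presupposes) and sets up the orbit picture reused later. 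One point you should tighten: for $\rho>0$ it is \emph{not} true that every $d>0$ makes $Q$ positive on some interval --- the paper shows this happens precisely when $d>d_*=\rho^{\,p}p^{\,p}(1-p)^{1-p}$, by locating the unique positive critical point $x_*$ of $Q(x^2)$ and requiring $Q(x_*^2)>0$. In your write-up you would need to observe that the hypothesis of nonconstant curvature forces $u_s^2>0$ somewhere, hence $Q>0$ somewhere, hence $d>d_*$; without that remark the claim of ``exactly two simple positive roots'' for all $d>0$ is false. A second, cosmetic imprecision: in the $\rho\le 0$ case the solution $u(s)$ is not monotone on its maximal domain (it still bounces once off the single root $u_2$ and then decays to $0$ asymptotically, since $u=0$ is a zero of $F$ of order at least two); the correct statement is that it has at most one interior critical value and tends to a limit as $s\to\pm\infty$, which is what actually excludes periodicity.
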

\begin{proof} Assume that $\gamma_d$ is a $p$-elastic curve in $N^2(\rho)$ with nonconstant curvature and $d>0$. It then follows that equation \eqref{Q} must be satisfied. To simplify the notation we call $x=\sqrt{u}>0$ and $y=x_s$, so that \eqref{Q} simplifies to
\begin{equation}\label{xy}
y^2=\frac{x^2}{9p^2}Q(x^2)\,.
\end{equation}
This is an algebraic curve representing the orbit of the differential equation \eqref{Q} in the phase plane. Moreover, applying the standard square root method of algebraic geometry, if this curve is closed then the polynomial $Q(x^2)$ has at least two positive roots.

It follows that since $d>0$, if $\rho\leq 0$ there is only one change of sign between the first and second coefficients of $Q(x^2)$ (see \eqref{Q} for the definition of $Q(u)$) and, hence, we conclude from Descartes' rule of signs that in these cases $Q(x^2)$ has only one positive root. That is the algebraic curve described by \eqref{xy} cannot be closed. Next, for $\rho>0$, a similar argument shows that $Q(x^2)$ has at most two positive roots. However, it is easy to check that $Q(x^2)$ has only one positive critical point at
$$x_*=\left(\frac{d}{1-p}\right)^{\frac{1-p}{6p}}\,,$$
which is a local maximum. Consequently, whenever $Q(x_*^2)>0$ we will have exactly two positive roots. After some manipulations we see that this happens precisely whenever
\begin{equation}\label{d*}
d>d_*=\rho^{\,p}\, p^{\,p}\left(1-p\right)^{1-p}\,.
\end{equation}

Finally, assume that $\rho>0$ and $d>d_*$. In this setting, the curve $C(s)=\left(x(s),y(s)\right)$ is included in the trace of the closed regular curve described by \eqref{xy} and it can be thought as a bounded integral curve of the smooth vector field
$$X(x,y)=\left(y,\frac{x}{9p^2}\left[Q(x^2)+x^2Q'(x^2)\right]\right),$$
defined in $\{(x,y)\in\mathbb{R}^2\,\lvert\,x>0\}$. We mention here that $x=\sqrt{u}>0$ and, hence, the domain of $X$ cannot be extended to the whole plane. This implies that $C(s)$ is smooth and defined on the whole $\mathbb{R}$ and so are the associated curves $\gamma_d$. In other words, their maximal domain is the whole real line $\mathbb{R}$. Furthermore, since the vector field $X$ has no zeros along the curve $C(s)$, we conclude from the Poincar\'e-Bendixson Theorem that $C(s)$ is a periodic curve. Therefore, the nonconstant curvature of $\gamma_d$ (equivalently, the associated $u(s)$) is a periodic function. This finishes the proof.
\end{proof}

As a consequence of Proposition \ref{periodic}, in what follows we will assume that $N^2(\rho)=\mathbb{S}^2(\rho)$. We will also assume from now on that all our curves are defined on its maximal domain, i.e., on the whole real line $\mathbb{R}$. If $\gamma_d$ is a $p$-elastic curve in $\mathbb{S}^2(\rho)$ with nonconstant curvature, then $d>d_*$ and \eqref{fi} holds. Thus, from Proposition \ref{charP} we have that there exists a coordinate system such that the first coordinate of $\gamma_d$ is \eqref{multiple} for $P(\kappa)=\kappa^p$. Combining this with the fact that $\gamma_d(s)$ is an arc length parametrized curve lying on the round sphere of radius $1/\rho$, $\mathbb{S}^2(\rho)\subset\mathbb{R}^3$, and the definition of the variable $u(s)$, we conclude that
\begin{equation}\label{param}
\gamma_d(s)=\frac{1}{\sqrt{\rho\,d\,u^3(s)\,}}\left(\sqrt{\rho\,}\,p,\sqrt{du^3(s)-\rho\,p^2}\sin\psi(s),\sqrt{du^3(s)-\rho\,p^2}\cos\psi(s)\right),
\end{equation}
is a parametrization of $\gamma_d(s)$. Here, the function $\psi(s)$ is given by
\begin{equation}\label{psi}
\psi(s)=\left(1-p\right)\sqrt{\rho\,d}\int \frac{u^{3(2-p)/(2(1-p))}}{du^3-\rho\,p^2}\,ds\,,
\end{equation}
and so, after a change of variable involving \eqref{Q}, the parametrization \eqref{param} is given in terms of just one quadrature.

From this parametrization we can geometrically describe $p$-elastic curves in $\mathbb{S}^2(\rho)$. Since these curves have periodic curvature, it is enough to describe them just for one period of the curvature. The complete curves are constructed by smoothly gluing congruent copies of the part covered in one period of the curvature (see Figures \ref{Figure1} and \ref{Figure2}).

We first notice that the first coordinate of $\gamma_d(s)$ is always positive, which means that $\gamma_d$ never cuts the equator $x_1=0$ and is always contained on the half-sphere $\{(x_1,x_2,x_3)\in\mathbb{S}^2(\rho)\,\lvert\,x_1>0\}$. Next, we focus on the tangent vector field to $\gamma_d(s)$. It is clear that $x_1'(s)=-3pu'(s)u^{-5/2}(s)/(2\sqrt{d})$, and as a consequence it vanishes at the maximum and minimum curvatures (equivalently, maximum and minimum values for $u(s)$) of $\gamma_d$. At those points, the $p$-elastic curve meets tangentially two parallels, respectively. Moreover, since the curvature increases from the minimum to the maximum, $\gamma_d$ is bounded between these parallels. Indeed, since at those points (see \eqref{Q})
\begin{equation}\label{pole}
du^3-\rho\,p^2=(1-p)^2u^{3/(1-p)}>0\,,
\end{equation}
none of those parallels can be the pole of the parametrization. In other words, the curve $\gamma_d$ never passes through the pole $(1/\sqrt{\rho},0,0)$. Finally, differentiating \eqref{psi} with respect to the arc length parameter, we observe that the function $\psi(s)$ is monotonic for each half period of the curvature. Thus, the $p$-elastic curve $\gamma_d$ goes always forward and it does not cut itself in one period of its curvature, unless it gives more than one round in that period.

\subsection{Closure Conditions}

In order to obtain closure conditions we adapt some standard computations, obtaining the following result.

\begin{prop}\label{closure} Let $\gamma_d\subset\mathbb{S}^2(\rho)$ be a $p$-elastic curve with $d>d_*$ defined on its maximal domain. Then, $\gamma_d(s)$ is closed if and only if the following identity holds
\begin{equation}\label{I}
I(d)=\left(1-p\right)\sqrt{\rho\,d}\int_0^\varrho \frac{u^{3(2-p)/(2(1-p))}}{du^3-\rho\,p^2}\,ds=\frac{2\pi l}{r}\,,
\end{equation}
for some natural numbers $l$ and $r$ such that ${\rm gcd}(l,r)=1$. Here, $\varrho\equiv\varrho(d)$ represents the period of the curvature of $\gamma_d$.
\end{prop}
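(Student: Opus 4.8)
The plan is to express the closure of $\gamma_d$ purely in terms of the periodicity data already extracted from the parametrization \eqref{param}. The key observation is that the three coordinate functions of $\gamma_d(s)$ are built from two ingredients: the periodic function $u(s)$ (whose period is $\varrho$, by Proposition \ref{periodic}) and the \emph{angular} function $\psi(s)$ defined in \eqref{psi}. Since the first coordinate $x_1(s)=\sqrt{\rho}\,p/\sqrt{\rho\,d\,u^3(s)}$ and the radial factor $\sqrt{du^3(s)-\rho\,p^2}/\sqrt{\rho\,d\,u^3(s)}$ depend only on $u(s)$, they are automatically $\varrho$-periodic. Hence the \emph{only} obstruction to closure is whether the pair $(\sin\psi,\cos\psi)$ returns to its initial value after a whole number of curvature periods; geometrically, $\gamma_d$ closes up exactly when the net rotation of the angle $\psi$ accumulated over one period of $u$ is a rational multiple of $2\pi$.

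First I would compute the increment $\Delta\psi$ of the angular function $\psi(s)$ over a single period $[0,\varrho]$ of the curvature. By \eqref{psi} this is
\[
\Delta\psi=\psi(\varrho)-\psi(0)=\left(1-p\right)\sqrt{\rho\,d}\int_0^\varrho \frac{u^{3(2-p)/(2(1-p))}}{du^3-\rho\,p^2}\,ds=I(d),
\]
which is precisely the quantity $I(d)$ in the statement. I would note that the integrand is well defined and finite: the geometric analysis preceding the proposition guarantees via \eqref{pole} that the denominator $du^3-\rho\,p^2$ is strictly positive at the turning points of $u$ (the curve never reaches the pole), so there is no singularity over the period, and $I(d)$ is a genuine positive number. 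Thus after each period the point $\gamma_d$ has the same ``latitude'' (same $u$, hence same $x_1$ and same radial distance from the pole) but its longitude has advanced by the fixed angle $I(d)$.

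Next I would translate closure into a condition on $I(d)$. The curve $\gamma_d$ is closed if and only if there is a smallest positive integer $r$ such that after $r$ periods of the curvature the total angular advance $r\,I(d)$ is an integer multiple of $2\pi$, say $2\pi l$, so that the point and its tangent return to their initial position. Requiring $\gcd(l,r)=1$ simply selects the minimal such $r$ and records the resulting closed curve as one winding $l$ times around the pole while completing $r$ curvature periods. Rearranging $r\,I(d)=2\pi l$ gives exactly the commensurability identity \eqref{I}, namely $I(d)=2\pi l/r$. Conversely, if $I(d)=2\pi l/r$ with $\gcd(l,r)=1$, then after $r$ periods all three coordinate functions simultaneously return to their starting values (the $u$-dependent factors by periodicity of $u$, the trigonometric factors because $r\,I(d)=2\pi l$), so $\gamma_d$ is a closed curve.

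The main obstacle I anticipate is the rigorous bookkeeping of periodicity versus closure: one must be careful that the \emph{smallest} period of the full curve $\gamma_d$ is $r$ times the period $\varrho$ of $u$, and not a smaller fraction thereof, which is where the coprimality $\gcd(l,r)=1$ and the monotonicity of $\psi$ on each half-period (established in the preceding geometric discussion) are essential. In particular, the monotonicity of $\psi(s)$ guarantees that $\psi$ is strictly increasing, so the angular advance is strictly positive and never cancels within a period; this rules out the degenerate possibility that the curve retraces itself or closes prematurely. Assembling these facts shows that the single scalar equation \eqref{I} is both necessary and sufficient for closure, completing the proof.
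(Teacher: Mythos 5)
Your proposal is correct and follows essentially the same route as the paper: both reduce closure to the condition that the angular advance $\psi(r\varrho)$ equals $2\pi l$ for some naturals $l,r$, and both use the $\varrho$-periodicity of $u(s)$ to write $\psi(r\varrho)=r\psi(\varrho)=rI(d)$, yielding \eqref{I}. The extra remarks you add (finiteness of the integrand via \eqref{pole}, monotonicity of $\psi$) are consistent with the paper's surrounding discussion but not needed beyond what the paper's own two-line argument already establishes.
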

\begin{proof} Let $\gamma_d$, $d>d_*$, be a $p$-elastic curve defined on its maximal domain. Then, from Proposition \ref{periodic}, the curvature of $\gamma_d$ is a periodic function, and we denote by $\varrho\equiv\varrho(d)$ its period. It then follows from the parametrization \eqref{param}, that $\gamma_d$ is closed if and only if the integral $\psi(s)$, \eqref{psi}, along a natural multiple of $\varrho$ is a natural multiple of $2\pi$, i.e., assuming without loss of generality that the origin of the arc length parameter is $s=0$, if and only if
$$\psi(r\varrho)=(1-p)\sqrt{\rho\,d}\int_0^{r\varrho}\frac{u^{3(2-p)/(2(1-p))}}{du^3-\rho\,p^2}\,ds=2\pi l\,,$$
for suitable natural numbers $l$ and $r$, which may be considered co-primes. 

Finally, from the periodicity of $u(s)$ we get that $\psi(r\varrho)=r\psi(\varrho)$ and, hence, the result follows.
\end{proof}

We point out here that the numbers $l$ and $r$ have a geometric meaning. In fact, the number of times the $p$-elastic curve winds around the pole of the parametrization $(1/\sqrt{\rho},0,0)$ is represented by $l$ (indeed, $l$ is the winding number of the projection of $\gamma_d$ to the plane $\{(x_1,x_2,x_3)\in\mathbb{R}^3\,\lvert\,x_1=0\}$ around the point $(0,0)$, i.e., the projection of the rotation axis $\{(x_1,0,0)\,\lvert\,x_1\in\mathbb{R}\}$), while $r$ is the number of periods of the curvature contained in one period of the curve, i.e., the number of lobes of the $p$-elastic curve.

Let $\gamma_d$ be a $p$-elastic curve in $\mathbb{S}^2(\rho)$ with $d>d_*$. Then the curvature of $\gamma_d$ is a nonconstant periodic function and so is $u(s)$. We denote by $\alpha\equiv\alpha(d)$ (respectively, $\beta\equiv\beta(d)$) the maximum (respectively, the minimum) value of $u(s)$ associated to $\gamma_d$. If we use \eqref{Q} to make a change of variable on the integral $I(d)$, \eqref{I}, we obtain
\begin{equation}\label{INew}
I(d)=3p\left(1-p\right)\sqrt{\rho\,d}\int_\beta^\alpha \frac{u^{(4-p)/(2(1-p))}}{\left(du^3-\rho\,p^2\right)\sqrt{Q(u)}}\,du\,,
\end{equation}
where $Q(u)$ is the polynomial of degree $n+1$ introduced in \eqref{Q}. This expression can be understood as a function $I:(d_*,\infty)\subset\mathbb{R}\longrightarrow\mathbb{R}$, where $d_*$ is defined in \eqref{d*}. In order to prove the existence of closed $p$-elastic curves, we need to check that the image of $I(d)$ is not constant and, consequently, there must exist rational multiples of $2\pi$, proving the condition of Proposition \ref{closure}. Moreover, to find the possible restrictions on the parameters $l$ and $r$, we also need to analyze the limits of $I(d)$ as $d$ approaches $d_*$ and $d\to\infty$.

We prove this in the following technical lemma.

\begin{lem}\label{lemma} Let $I:(d_*,\infty)\subset\mathbb{R}\longrightarrow\mathbb{R}$ be a function defined by the integral expression \eqref{INew}. Then, $I$ is a continuous function on $d$ satisfying
$$\lim_{d\to d_*} I(d)=\sqrt{2}\,\pi\,\quad\quad\quad\text{and}\,\quad\quad\quad \lim_{d\to\infty} I(d)=\pi\,.$$
(Observe that these limits hold for every $p=(n-2)/(n+1)$ and $n\geq 3$.)
\end{lem}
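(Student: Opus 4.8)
The plan is to handle continuity and the two boundary limits separately, in each case first taming the endpoint singularities of the integrand in \eqref{INew}. For $d>d_*$ the polynomial $Q$ has exactly two positive roots $\beta<\alpha$, both simple, with $Q>0$ on $(\beta,\alpha)$; writing $Q(u)=(1-p)^2(u-\beta)(\alpha-u)R(u)$, the sign of the leading coefficient forces $R>0$ on $[\beta,\alpha]$, and \eqref{pole} together with the monotonicity of $u\mapsto du^3-\rho p^2$ shows that $du^3-\rho p^2>0$ throughout $[\beta,\alpha]$, so the only singularities are the integrable square roots at $\beta$ and $\alpha$. First I would substitute $u=m+a\sin\theta$ with $m=(\alpha+\beta)/2$ and $a=(\alpha-\beta)/2$, which turns $(u-\beta)(\alpha-u)$ into $a^2\cos^2\theta$, cancels the Jacobian against the square root, and yields
\[
I(d)=3p\sqrt{\rho d}\int_{-\pi/2}^{\pi/2}\frac{(m+a\sin\theta)^{(n+2)/2}}{\big(d(m+a\sin\theta)^3-\rho p^2\big)\sqrt{R(m+a\sin\theta)}}\,d\theta .
\]
Since $\alpha(d),\beta(d)$ (hence $m,a$) depend smoothly on $d$ by the implicit function theorem and the integrand is now smooth and bounded, continuity of $I$ on $(d_*,\infty)$ follows from continuous dependence of parameter integrals.

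The same representation gives the first limit painlessly. As $d\to d_*$ the two roots coalesce, $a\to0$ and $m\to u_*$, so the integrand converges uniformly to its value at $u_*$ and
\[
\lim_{d\to d_*}I(d)=3\pi p\sqrt{\rho d_*}\,\frac{u_*^{(n+2)/2}}{(d_*u_*^3-\rho p^2)\sqrt{R(u_*)}} .
\]
It remains to evaluate the right-hand side from the explicit data $d_*=\rho^p p^p(1-p)^{1-p}$ and $u_*=(\rho p/(1-p))^{(1-p)/3}$, together with $d_*u_*^3-\rho p^2=\rho p(1-p)$ (from \eqref{pole}) and $R(u_*)=-Q''(u_*)/(2(1-p)^2)$. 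Using $Q'(u_*)=0$ one computes $Q''(u_*)=-3d_*u_*(n-2)=-9pd_*u_*/(1-p)$, and inserting everything (conveniently in the variable $\sigma=\rho p/(1-p)$, for which $u_*^{\,n+1}=\sigma$) collapses the expression to $\sqrt2\,\pi$. This is routine bookkeeping rather than a genuine difficulty.

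The limit $d\to\infty$ is the crux, since here $\beta\to0$ and $\alpha\to\infty$ and the $\theta$-form is useless. Instead I would substitute $du^3=\rho p^2\zeta$ in \eqref{INew}, which replaces $du^3-\rho p^2$ by $\rho p^2(\zeta-1)$ and sends $[\beta,\alpha]$ to $[\zeta_\beta,\zeta_\alpha]$ with $\zeta_\beta\to1$, $\zeta_\alpha\to\infty$. Collecting the explicit powers of $d$ produces the exact identity $I(d)=\delta^{1/2}J(\delta)$, where $\delta=(1-p)^2(\rho p^2)^{(n-2)/3}d^{-(n+1)/3}\to0$ and
\[
J(\delta):=\int_{\zeta_\beta}^{\zeta_\alpha}\frac{\zeta^{(n-2)/6}}{(\zeta-1)\sqrt{(\zeta-1)-\delta\zeta^{(n+1)/3}}}\,d\zeta ,
\]
so the whole statement reduces to proving $J(\delta)\sim\pi\,\delta^{-1/2}$. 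To this end I would localize the divergence: near the upper root $\zeta_\alpha\sim\delta^{-3/(n-2)}$ the contribution is $O\big(\delta^{-(n-5)/(2(n-2))}\big)$ (a $\log$ when $n=5$), and the bounded middle range is $O(1)$, both $o(\delta^{-1/2})$, so the leading term comes entirely from $\zeta$ near $1$. There I would rescale $\zeta=1+\delta\xi$: the integrand tends to $\delta^{-1/2}\,\xi^{-1}(\xi-1)^{-1/2}$ with limits $\xi=1$ and $\xi=\infty$, and $\int_1^\infty\xi^{-1}(\xi-1)^{-1/2}\,d\xi=\pi$ (e.g.\ via $\xi=\sec^2\phi$), which completes the proof.

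The one genuinely delicate point is this asymptotic analysis of $J$: one must justify the three-region decomposition and the interchange of limit and integral uniformly in $d$, and check that the subleading exponent $(n-5)/(2(n-2))$ stays strictly below $1/2$ for every $n\ge3$, so that the near-$\beta$ region always dominates. By contrast the regularizing substitution, the smoothness of the roots, and the algebraic evaluation at $d_*$ are all elementary.
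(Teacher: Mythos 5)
Your proposal is correct, and its verifiable ingredients check out: $Q''(u_*)=-3d_*u_*(n-2)=-9pd_*u_*/(1-p)$, $d_*u_*^3-\rho p^2=\rho p(1-p)$, $u_*^{n+1}=\rho p/(1-p)$, the exact identity $I(d)=\delta^{1/2}J(\delta)$ with $\delta=(1-p)^2(\rho p^2)^{(n-2)/3}d^{-(n+1)/3}$, the value $\int_1^\infty \xi^{-1}(\xi-1)^{-1/2}\,d\xi=\pi$, and the inequality $(n-5)/(2(n-2))<1/2$ for all $n\geq 3$. Where you diverge from the paper is instructive. For continuity and the limit $d\to d_*$ the paper is terser: it asserts continuity from the non-vanishing of $du^3-\rho p^2$ (glossing over the integrable square-root singularities at $\beta$ and $\alpha$ that your $u=m+a\sin\theta$ substitution removes explicitly) and it invokes Lemma 4.1 of Perdomo's paper \cite{P} to get exactly the coalescing-roots formula you derive by hand; your version is self-contained and the resulting expressions agree, since $\sqrt{-\tfrac12 Q''(u_*)}=(1-p)\sqrt{R(u_*)}$. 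For the limit $d\to\infty$, which both you and the paper treat as the crux, the routes are genuinely different: the paper extends the integrand to a multivalued function on $\mathbb{C}$ with a branch cut, deforms a large contour onto the segment $(\beta,\alpha)$, shows the residues at the complex singularities cancel and that the pole $u_o$ contributes $2\pi$, and lets the outer contour vanish as $d\to\infty$; you instead rescale to $\zeta=1+\delta\xi$ and isolate the divergence at the lower root by a three-region decomposition. The complex-analytic argument buys an exact identity $\int_{\sigma_*}f=\int_\sigma f-2\pi$ valid for every $d>d_*$ (only the outer contour needs a limit), while your real-variable argument is more elementary but shifts all the work onto the uniform error estimates you flag yourself; those estimates do close (the subleading contributions are $O(\delta^{-(n-5)/(2(n-2))})$, or logarithmic at $n=5$, hence $o(\delta^{-1/2})$), so the proof goes through, but that justification is the part you would still need to write out in full.
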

\begin{proof} The continuity of the function $I(d)$ follows directly from \eqref{pole}, since the denominator of the integrand does not vanish.

We now begin proving the limit when $d\to d_*$. To compute this limit we apply Lemma 4.1 of \cite{P} (see also Corollary 4.2 of the same paper). In our setting all the conditions are satisfied and so this result shows that
$$\lim_{d\to d_*} I(d)=3p\left(1-p\right)\sqrt{\rho\,d_*}\,\frac{u_*^{(4-p)/(2(1-p))}}{\left(d_*u_*^3-\rho\,p^2\right)\sqrt{-\frac{1}{2}Q''(u_*)}}\,\pi\,,$$
where $u_*=d_*^{(1-p)/(3p)}\left(1-p\right)^{(p-1)/(3p)}$ (compare to $x_*$ of Proposition \ref{periodic}). We then simplify this expression using \eqref{d*} and obtain, after long straightforward manipulations, that $\lim_{d\to d_*}I(d)=\sqrt{2}\,\pi$.

Next, in order to compute $\lim_{d\to\infty}I(d)$, we will work in the complex plane $\mathbb{C}$, extending the integrand of $I$. We first notice that the polynomial $Q(u)$ has degree $n+1$ and exactly two positive roots, $\alpha$ and $\beta$, when $d>d_*$. Moreover, from Descartes' rule of signs, we conclude that the polynomial has exactly one negative root (which we denote by $\delta$), if $n$ is even; or zero negative roots, if $n$ is odd. Therefore, the rest of the roots lie in $\mathbb{C}\setminus\mathbb{R}$. We denote them by $\omega_j$ and $\overline{\omega}_j$, $j=1,...,\lfloor (n-1)/2\rfloor$, where the upper line denotes the complex conjugate.

Let us now define a complex function $h(z)$ by
$$h(z)=\left(-i\sqrt{-z}\,\right)^{n+2}\left(\alpha-z\right)\sqrt{\frac{z-\beta}{\alpha-z}}\,\sqrt{\sum_{j=1}^{(n-1)/2}\left(z-\omega_j\right)\left(z-\overline{\omega}_j\right)}\,,$$
if $n$ is odd, or by
$$h(z)=-i\left(-i\sqrt{-z}\right)^{n+2}\left(\alpha-z\right)\sqrt{\frac{z-\beta}{\alpha-z}}\,\sqrt{-(z-\delta)}\,\sqrt{\sum_{j=1}^{(n-2)/2}\left(z-\omega_j\right)\left(z-\overline{\omega}_j\right)}\,,$$
if $n$ is even. In both cases, the square root symbol represents the principal branch of it, i.e., $\sqrt{z}=\sqrt{r e^{i\theta}}=\sqrt{\lvert r\rvert}e^{i\theta/2}$, $\theta\in(-\pi,\pi)$. Since the Moebius transformation $(z-\beta)/(\alpha-z)$ maps the set of complex numbers $z=u+iv$ with $v=0$ and $\beta<u<\alpha$ to $\mathbb{R}^+$, the set of positive real numbers, the function $h(z)$ is well-defined and analytic far from the non positive part of the $u$-axis and the roots of the polynomial $Q(u)$. Denote by $u_o=(\rho\,p^2/d)^{1/3}$ the only positive real solution of $du^3-\rho\,p^2=0$ and by $u_1$, $\overline{u}_1$ the remaining two complex solutions. With this notation the complex function
$$f(z)=3p\left(1-p\right)\sqrt{\rho\,d}\,\frac{z^{n+2}}{\left(dz^3-\rho\,p^2\right)h(z)}\,,$$
is well-defined and analytic far from the corresponding singularities and the non positive part of the $u$-axis. Observe that the exponent of the numerator is $n+2=(4-p)/(1-p)$.

Moreover, from the definition of the functions $h(z)$ and $f(z)$, for any $u\in(\beta,\alpha)$ we have that
\begin{eqnarray*}
\lim_{\epsilon\to 0^+} f(u+i\epsilon)&=&f(u)\,,\\
\lim_{\epsilon\to 0^-} f(u+i\epsilon)&=&-f(u)\,,
\end{eqnarray*}
where $f(u)$ is, precisely, the integrand of \eqref{INew}.

Then, defining a path $\sigma$ to be a big circle enclosing all the singularities of $f$, with a cut so that the non positive part of the $u$-axis is not inside $\sigma$, $\sigma_*$ to be a small path around $\alpha$ and $\beta$, and $\sigma_{-}$ to be a little circle around each of the rest of the singularities of $f$ that lie in $\mathbb{C}\setminus\mathbb{R}^{-}$, we conclude from the analyticity of $f$ that
\begin{eqnarray*}
\int_\sigma f(z)\,dz&=&\int_{\sigma_*}f(z)\,dz+\int_{\sigma_{u_o}}f(z)\,dz+\left(\int_{\sigma_{u_1}}f(z)\,dz+\int_{\sigma_{\overline{u}_1}}f(z)\,dz\right)\\
&&+\sum_{j=1}^{\lfloor (n-1)/2\rfloor}\int_{\sigma_{\omega_j}}f(z)\,dz+\sum_{j=1}^{\lfloor (n-1)/2\rfloor}\int_{\sigma_{\overline{\omega}_j}}f(z)\,dz\,,
\end{eqnarray*}
assuming all the paths are oriented counter-clockwise. It follows from Cauchy's Integral Formula that the integrals in the second line above are all zero, and so the singularities $\omega_j$ and $\overline{\omega}_j$ for $j=1,...,\lfloor (n-1)/2\rfloor$ are removable. Moreover, we can apply once again Cauchy's Integral Formula to check that
$$\int_{\sigma_{u_1}}f(z)\,dz+\int_{\sigma_{\overline{u}_1}}f(z)\,dz=0\,.$$
Similarly, for the singularity $u_o$, we compute
$$\int_{\sigma_{u_o}}f(z)\,dz=2\pi i\left(\frac{1}{2\pi i}\int_{\sigma_{u_o}}\frac{g(z)}{z-u_o}\,dz\right)=2\pi i\, g(u_o)\,,$$
where $g(z)=(z-u_o)f(z)$. A straightforward simplification after evaluating the function $g(z)$ at $u_o$, gives that the value of above integral is $2\pi$. Therefore, we conclude with
$$\int_{\sigma_*}f(z)\,dz=\int_\sigma f(z)\,dz-2\pi\,,$$
for any $d>d_*$. Finally, along $\sigma$ the function $f(z)\to 0$ whenever $d\to\infty$, so using above limits for $f(z)$ we get when the path $\sigma_*$ tends to the interval $(\beta,\alpha)$ (which we denote by $\epsilon\to 0$) that
$$\lim_{d\to\infty} I(d)=-\frac{1}{2}\lim_{\epsilon\to 0}\left(\lim_{d\to\infty}\int_{\sigma_*}f(z)\,dz\right)=-\frac{1}{2}\left(-2\pi\right)=\pi\,.$$
This finishes the proof.
\end{proof}

\begin{rem} Above result for the case $n=3$ was shown in \cite{MP}. Moreover, for $n=5$, i.e., $p=1/2$, it was obtained in \cite{AGP} rewriting $I(d)$ in terms of elliptic integrals (this method will also work for $n=3$ since in this case $Q(u)$ is a polynomial of degree four). For $n=5$, the result was also shown in \cite{AL,P} using the same approach from Complex Analysis. In these last three papers, the integral $I(d)$ arose from studying rotational minimal surfaces in $\mathbb{S}^3(\rho)$, that is, a completely different problem.
\end{rem}

We conclude from Lemma \ref{lemma} with the following result about the existence of closed $p$-elastic curves.

\begin{thm}\label{closed} Let $l$ and $r$ be co-prime natural numbers such that $r<2l<\sqrt{2}\,r$ holds. Then, for every natural number $n\geq 3$ fixed, there exists a closed $p$-elastic curve with $p=(n-2)/(n+1)$. Moreover, this curve winds around the pole $l$ times and has $r$ lobes.
\end{thm}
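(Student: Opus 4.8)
The plan is to combine the closure criterion of Proposition \ref{closure} with the boundary behaviour of the period map $I(d)$ established in Lemma \ref{lemma}, and then to invoke the Intermediate Value Theorem. By Proposition \ref{closure}, a $p$-elastic curve $\gamma_d$ with $d>d_*$ is closed precisely when $I(d)=2\pi l/r$ for some co-prime natural numbers $l$ and $r$; moreover, the discussion following that proposition identifies $l$ as the number of times $\gamma_d$ winds around the pole and $r$ as its number of lobes. Thus it suffices to show that every number of the form $2\pi l/r$ lying in the appropriate range is attained by $I$ on $(d_*,\infty)$.

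First I would record the elementary equivalence
$$r<2l<\sqrt{2}\,r\quad\Longleftrightarrow\quad \pi<\frac{2\pi l}{r}<\sqrt{2}\,\pi\,,$$
obtained by dividing by $r$ and multiplying by $\pi$. Hence the hypothesis on $l$ and $r$ is exactly the statement that the target value $2\pi l/r$ lies in the open interval $(\pi,\sqrt{2}\,\pi)$.

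Next I would invoke Lemma \ref{lemma}: the function $I:(d_*,\infty)\to\mathbb{R}$ is continuous and satisfies $\lim_{d\to d_*}I(d)=\sqrt{2}\,\pi$ and $\lim_{d\to\infty}I(d)=\pi$. Given any value $c\in(\pi,\sqrt{2}\,\pi)$, since $I(d)\to\sqrt{2}\,\pi>c$ as $d\to d_*$ there is a $d_1$ near $d_*$ with $I(d_1)>c$, and since $I(d)\to\pi<c$ as $d\to\infty$ there is a $d_2>d_1$ with $I(d_2)<c$. The Intermediate Value Theorem applied to the continuous function $I$ on $[d_1,d_2]$ then yields a $d\in(d_1,d_2)$ with $I(d)=c$. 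In particular, taking $c=2\pi l/r$ produces a value $d>d_*$ for which Proposition \ref{closure} guarantees that $\gamma_d$ is closed.

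Finally, the co-primality of $l$ and $r$ ensures that the closure occurs after exactly $r$ periods of the curvature and not fewer, so that $\gamma_d$ has precisely $r$ lobes and winds exactly $l$ times around the pole, in accordance with the geometric interpretation recorded after Proposition \ref{closure}. I do not expect any serious obstacle here: all the analytic difficulty has been absorbed into Lemma \ref{lemma}, and the remaining argument is a clean application of the Intermediate Value Theorem together with the arithmetic translation of the hypothesis. The only point requiring minor care is the use of the one-sided limits at the endpoints of the open interval $(d_*,\infty)$ to locate the bracketing points $d_1<d_2$, rather than applying the Intermediate Value Theorem directly to a closed interval with prescribed endpoint values.
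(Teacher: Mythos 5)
Your proposal is correct and follows essentially the same route as the paper: translate the arithmetic hypothesis into $2\pi l/r\in(\pi,\sqrt{2}\,\pi)$, use the continuity and limiting values of $I$ from Lemma \ref{lemma} together with the Intermediate Value Theorem to find $d$ with $I(d)=2\pi l/r$, and invoke Proposition \ref{closure} and the geometric description following \eqref{param}. The only difference is that you make the bracketing argument for the IVT on the open interval explicit, which the paper leaves implicit.
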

\begin{proof} Let $l$ and $r$ be two natural numbers such that ${\rm gcd}(l,r)=1$ and $r<2l<\sqrt{2}\,r$. Then,
$$\pi<\frac{2\pi l}{r}<\sqrt{2}\,\pi$$
holds. Now, from Lemma \ref{lemma}, there exists a $d\equiv d_{l,r}>d_*$ such that
$$I(d_{l,r})=\frac{2\pi l}{r}\,.$$
Therefore, condition \eqref{I} of Proposition \ref{closure} is verified and, for every $n\geq 3$ we have a closed $p$-elastic curve $\gamma_{d_{l,r}}$, where $p=(n-2)/(n+1)$.

The second statement follows directly from the geometric description of $p$-elastic curves given after the parametrization \eqref{param}.
\end{proof}

For every $n\geq 3$, we have obtained in Theorem \ref{closed} a discrete biparametric family of closed $p$-elastic curves, where $p=(n-2)/(n+1)$. However, we will see that none of these curves is simple.

\begin{cor}\label{cor} Let $n\geq 3$ and $p=(n-2)/(n+1)$, then the closed $p$-elastic curve associated to the co-prime natural numbers $l$ and $r$ satisfying $r<2l<\sqrt{2}\,r$ has self-intersections.
\end{cor}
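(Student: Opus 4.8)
The plan is to work directly with the explicit parametrization \eqref{param} and reduce the question to an elementary statement about the latitude of $\gamma_{d_{l,r}}$ as a function of its azimuthal angle. First I would record that two parameter values $s_1\neq s_2$ produce the same point of $\mathbb{S}^2(\rho)$ if and only if $u(s_1)=u(s_2)$ (this fixes the first coordinate, since $x_1\propto u^{-3/2}$) and $\psi(s_1)\equiv\psi(s_2)\pmod{2\pi}$ (this fixes the remaining coordinates through the $\sin\psi,\cos\psi$ factors). Since $du^3-\rho\,p^2>0$ along the curve by \eqref{pole}, the angle $\psi(s)$ in \eqref{psi} is strictly monotonic, so I may use $\psi$ itself as a parameter and regard $u$ as a single-valued function $u=F(\psi)$.

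The key structural step is to observe that successive lobes are congruent. Over one period $\varrho$ of the curvature, $u$ returns to its initial value while, by Proposition \ref{closure}, $\psi$ increases by exactly $I(d_{l,r})=2\pi l/r$, and \eqref{param} shows that $s\mapsto s+\varrho$ acts on $\gamma_{d_{l,r}}$ as the rotation of angle $2\pi l/r$ about the axis $\{(x_1,0,0)\}$. Hence $F$ is periodic of period $I:=2\pi l/r$. Moreover, because \eqref{Q} is autonomous in $u$ and $\psi_s$ depends only on $u$, the trajectory is time-symmetric about each extremum of $u$, which translates into $F$ being even about each of its maxima and minima; equivalently, $F(\psi)=F(\psi')$ holds precisely when $\psi\equiv\psi'$ or $\psi\equiv-\psi'\pmod I$. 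The whole closed curve corresponds to $\psi\in[0,2\pi l)$.

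With this normal form in hand, the argument becomes arithmetic. A self-intersection is a pair $\psi_1\neq\psi_2$ in $[0,2\pi l)$ with $\psi_1\equiv\psi_2\pmod{2\pi}$ and $F(\psi_1)=F(\psi_2)$; writing $\psi_j=\bar\psi+2\pi a_j$ with $a_j\in\{0,\dots,l-1\}$ and dividing the period relation by $I$ (so that $2\pi/I=r/l$), the ``translation'' alternative $\psi_1\equiv\psi_2\pmod I$ forces $l\mid(a_1-a_2)$, which is impossible for distinct $a_j$ because $\gcd(l,r)=1$ — this is exactly the statement that the $r$ tangency points on each bounding parallel are distinct. Thus any coincidence must come from the reflection alternative $2\bar\psi/I+(a_1+a_2)\,r/l\in\mathbb{Z}$, which can always be solved for $\bar\psi$. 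Concretely I would take $a_1=0$, $a_2=1$ (legitimate since $r<2l<\sqrt2\,r$ forces $l\geq2$ and $l<r$) and choose $\bar\psi$ so that $\bar\psi/I$ and $\bar\psi/I+r/l$ are symmetric about a minimum of $F$; since $l\nmid r$ gives $r/l\notin\mathbb{Z}$, this places the coincidence at an interior latitude $u\in(\beta,\alpha)$ rather than on a bounding parallel, yielding a genuine self-intersection.

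The main obstacle, and the step deserving the most care, is the structural reduction of the second paragraph: proving rigorously that the latitude is a single-valued, $I$-periodic, reflection-symmetric function of the azimuthal angle, and that the congruence relating consecutive lobes is exactly the rotation by $2\pi l/r$. Once this normal form is established, the remaining work is the short number-theoretic selection above, in which the hypotheses $\gcd(l,r)=1$ and $r<2l<\sqrt2\,r$ are used only to exclude the translation case, to guarantee $l\geq2$, and to ensure the crossing occurs strictly between the two bounding parallels.
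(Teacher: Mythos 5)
Your argument is correct, but it takes a genuinely different route from the paper's. The paper's proof is indirect and topological: since $I(d_{l,r})<\sqrt{2}\,\pi<2\pi$ (Lemma \ref{lemma}), each lobe of $\gamma_{d_{l,r}}$ advances by less than a full turn and is simple, so the closed curve --- which lies in the annulus between the two bounding parallels, away from the pole, and winds $l$ times around it --- could only be simple if it closed up in a single round, i.e.\ $l=1$; and $l=1$ is incompatible with $r<2l<\sqrt{2}\,r$. You instead construct an explicit self-intersection: passing to the azimuthal angle $\psi$ as parameter (legitimate, since $du^3-\rho\,p^2>0$ along the whole curve makes $\psi$ in \eqref{psi} strictly increasing), you exploit the $I$-periodicity and reflection symmetry of the latitude $u=F(\psi)$ together with $\gcd(l,r)=1$ to solve the coincidence condition $\psi_1+\psi_2\equiv 0\ (\mathrm{mod}\ I)$ with $\psi_2-\psi_1=2\pi$. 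The structural claims you flag as delicate do all hold and are straightforward from the setup: $F$ is strictly monotone on each half-period because $u_s\neq 0$ away from the roots $\alpha,\beta$ of $Q$; the evenness of $F$ about its extrema follows from the autonomy and time-reversibility of \eqref{Q}; and the advance of $\psi$ by $2\pi l/r$ per period of $u$ is exactly the closure condition \eqref{I}. Note also that your exclusion of the translation case automatically guarantees that the constructed coincidence does not sit at an extremum of $F$ (being at an extremum would force $\psi_1\equiv\psi_2\ (\mathrm{mod}\ I)$, hence $l\mid r$), so the crossing is genuinely transversal. Your approach is longer but buys more: it locates the self-intersections explicitly, at interior latitudes between consecutive lobes, rather than merely deducing their existence, and it avoids relying on the fact --- used implicitly and not proved in the paper --- that a simple closed curve in an annulus avoiding the pole has winding number at most one about it.
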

\begin{proof} Denote by $\gamma_{d_{l,r}}$ a closed $p$-elastic curve for $p=(n-2)/(n+1)$ and $n\geq 3$ fixed. For this curve $r<2l<\sqrt{2}\,r$ holds and it closes up in $l$ rounds around the pole.

We know from the description after \eqref{param} that each $p$-elastic curve is simple in each period of its curvature as long as it gives less than a complete round around the pole, which is our case since $I(d_{l,r})<\sqrt{2}\,\pi<2\,\pi$ holds (see Lemma \ref{lemma}). Consequently, the closed $p$-elastic curve $\gamma_{d_{l,r}}$ will be simple if and only if it closes up in one round, i.e., $l=1$. That is, we need the existence of a natural number $r$ such that $r<2<\sqrt{2}\,r$ holds.

However, this relation is not possible, so $\gamma_{d_{l,r}}$ cannot be simple.
\end{proof} 

From the relation $r<2l<\sqrt{2}\,r$, it is easy to check that the simplest of these closed $p$-elastic curves with $p=(n-2)/(n+1)$ corresponds to the values $l=2$ and $r=3$, i.e., this curve winds $2$ times around the pole and has $3$ lobes. Similarly, the second simplest one arises for the values $l=3$ and $r=5$. For $n=3$, these two curves were shown in \cite{MP}. Moreover, in the case $n=5$, these curves are the profile curves of the simplest minimal rotationally symmetric tori in $\mathbb{S}^3(\rho)$ (other than the Clifford torus), which are well-known (\cite{AL,AGP,AP,P}). In Figures \ref{Figure1} and \ref{Figure2}, using the parameterization \eqref{param}, we show these two curves together with the sphere $\mathbb{S}^2(\rho)$ for different values of $n\geq 4$ to see how they deform as $n$ increases. These spherical curves are shown from above, so that all of them wind around the pole $(1/\sqrt{\rho},0,0)$. For each color we represent one period of the curvature (from the maximum curvature to the minimum and back to the maximum), and so we need $r$ colors, i.e., $r$ periods of the curvature, to close the curve.

\begin{figure}[h!]
\begin{center}
\includegraphics[width=4.5cm,height=4.5cm]{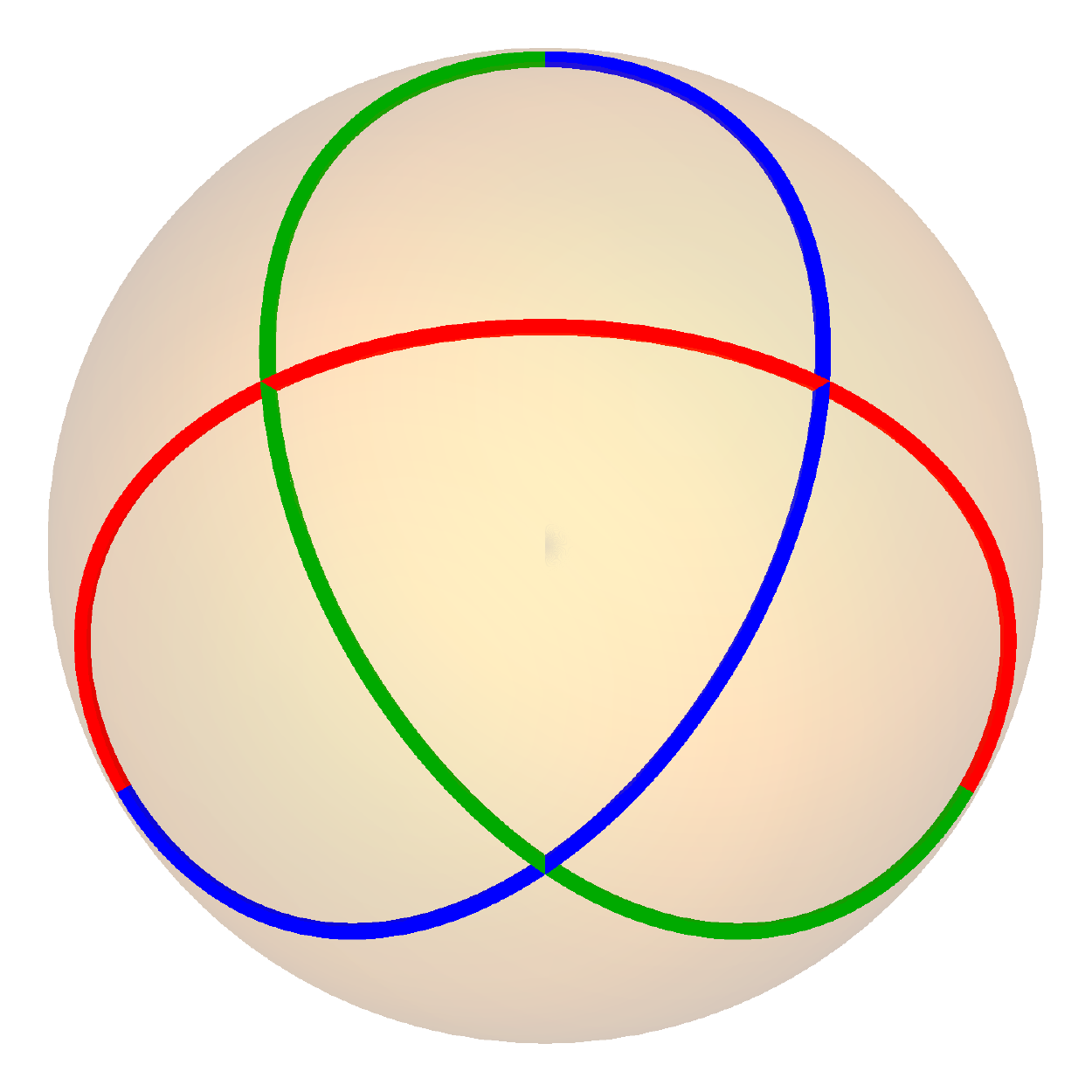}
\quad\quad
\includegraphics[width=4.5cm,height=4.5cm]{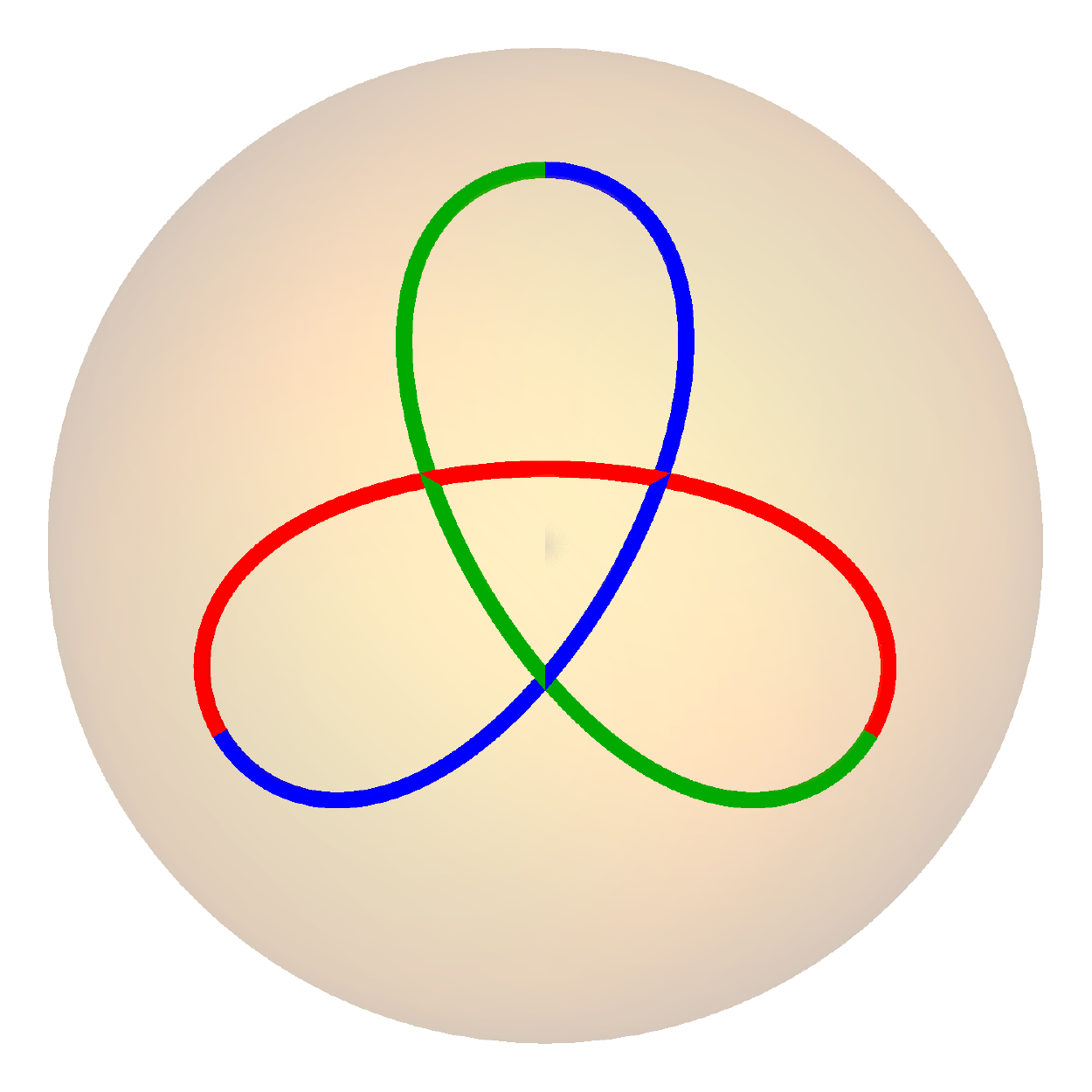}
\quad\quad
\includegraphics[width=4.5cm,height=4.5cm]{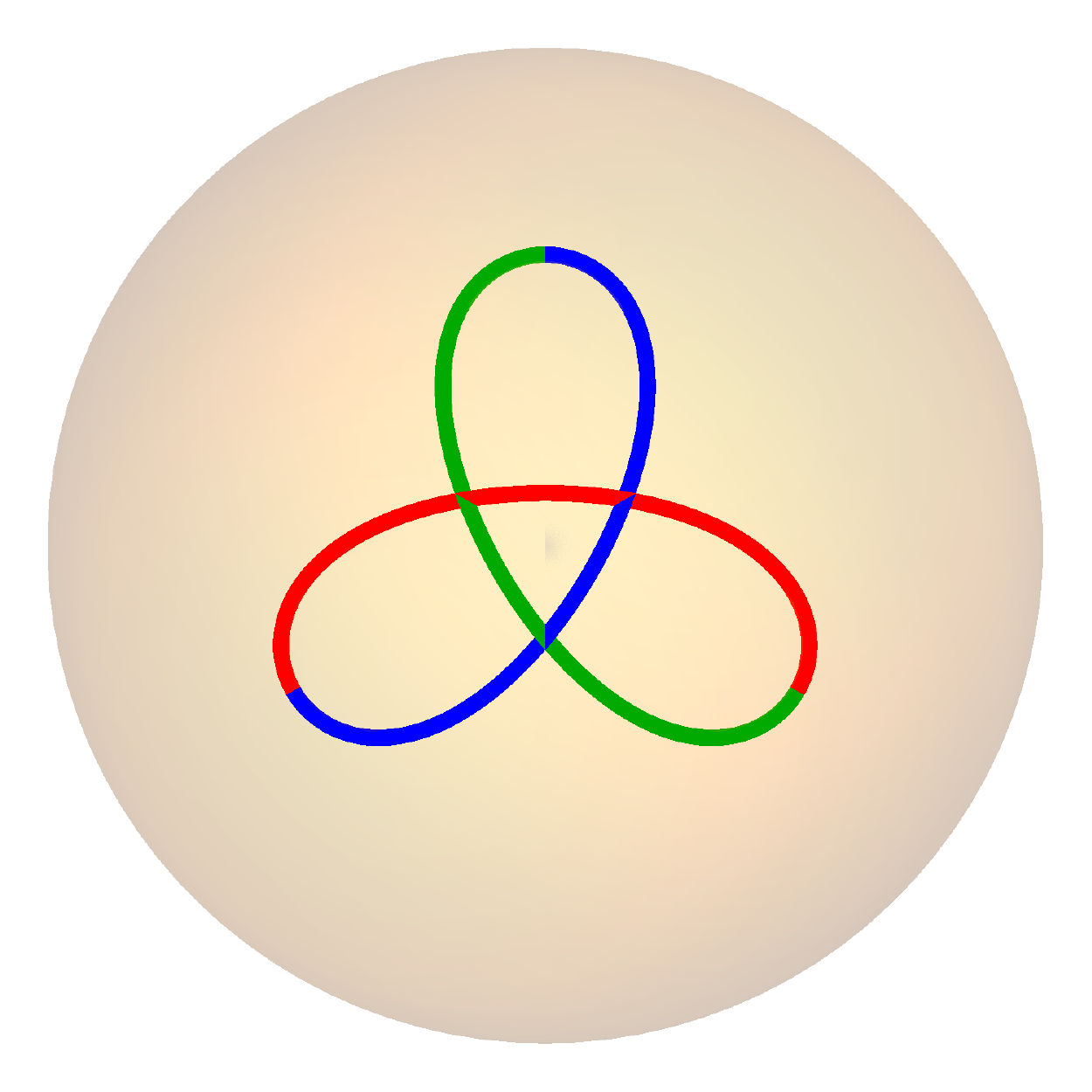}
\end{center}
\caption{Closed $p$-elastic curves for the values $l=2$ and $r=3$ for different values of $p=(n-2)/(n+1)$. From left to right: $n=4$, $n=15$ and $n=30$.}\label{Figure1}
\end{figure}

\begin{figure}[h!]
\begin{center}
\includegraphics[width=4.5cm,height=4.5cm]{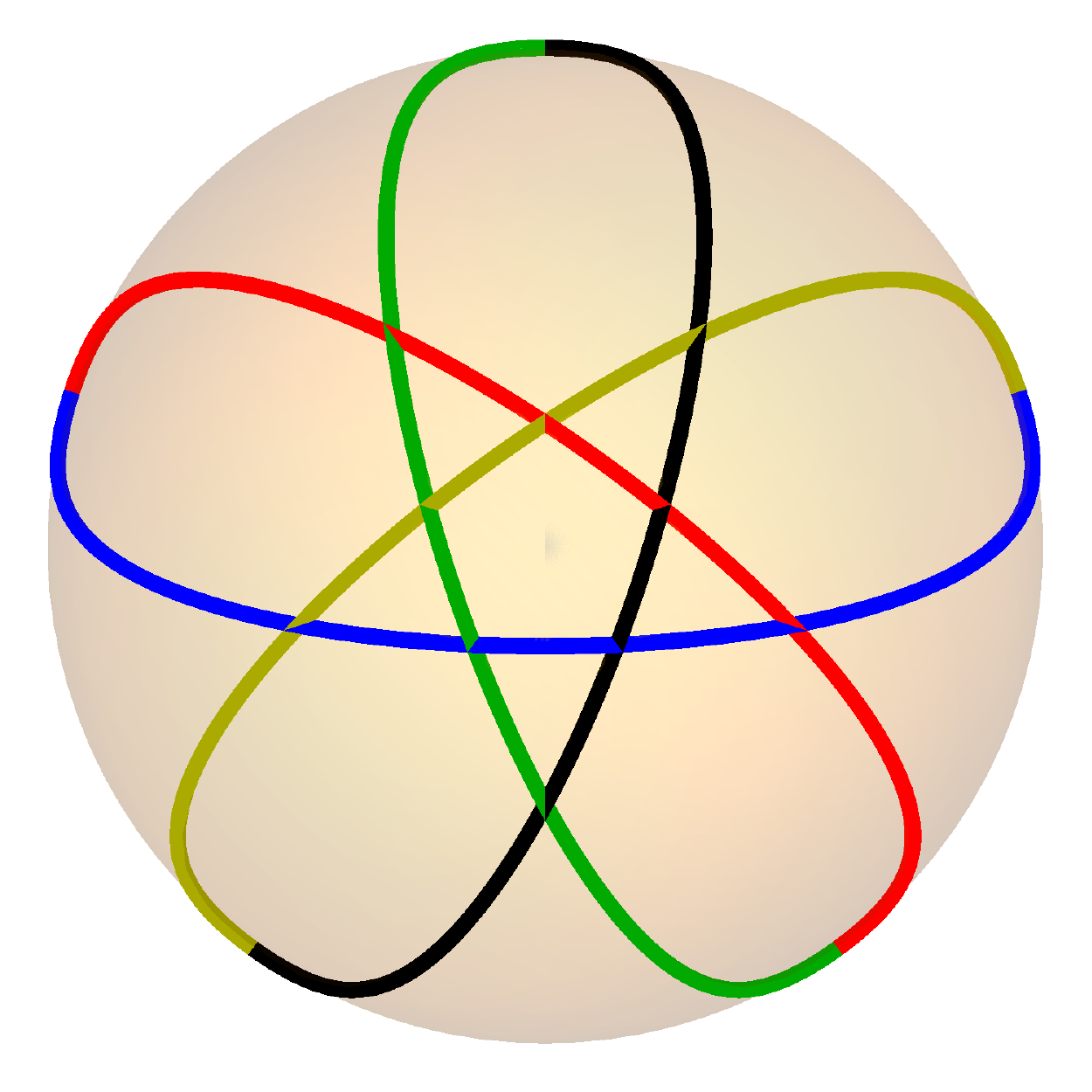}
\quad\quad
\includegraphics[width=4.5cm,height=4.5cm]{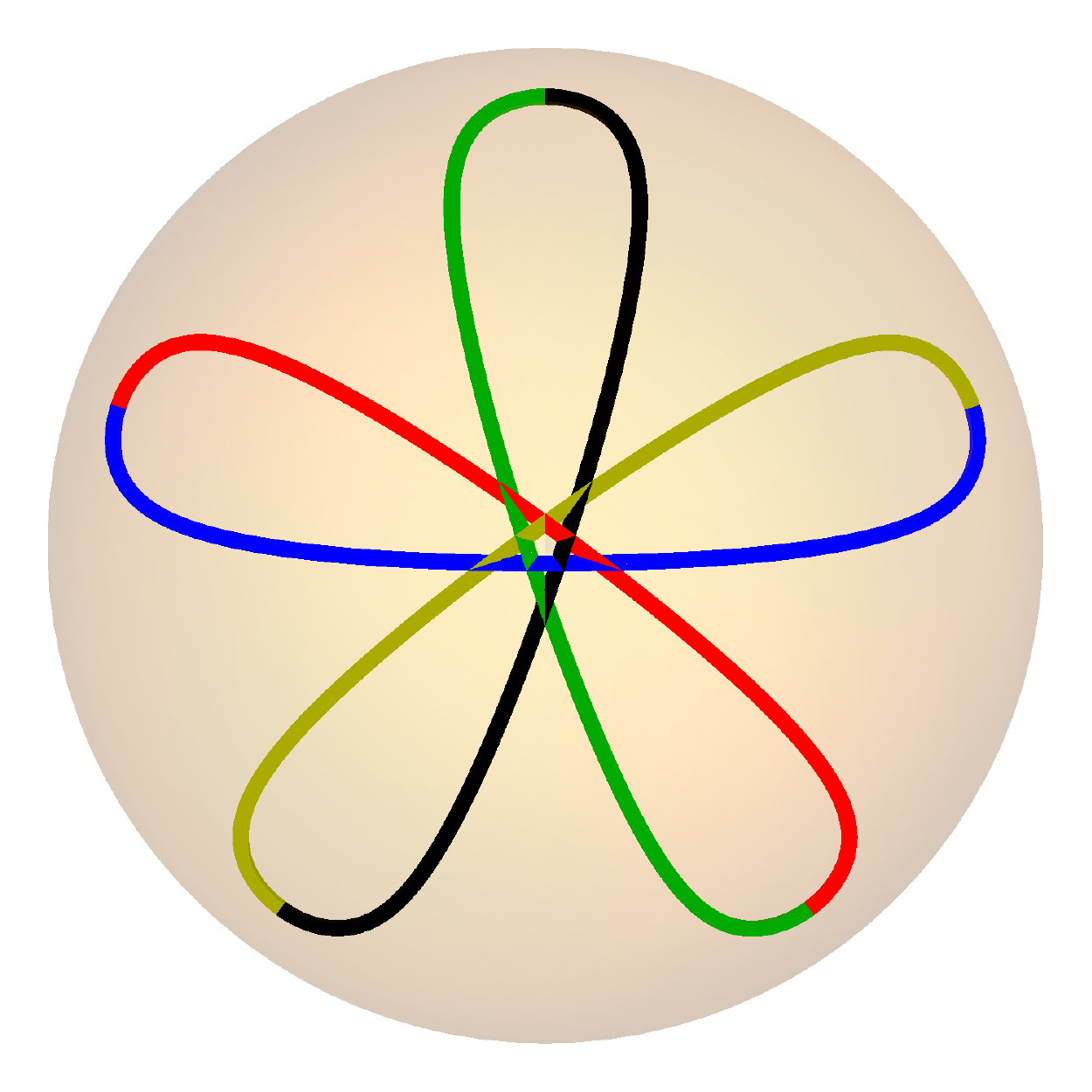}
\quad\quad
\includegraphics[width=4.5cm,height=4.5cm]{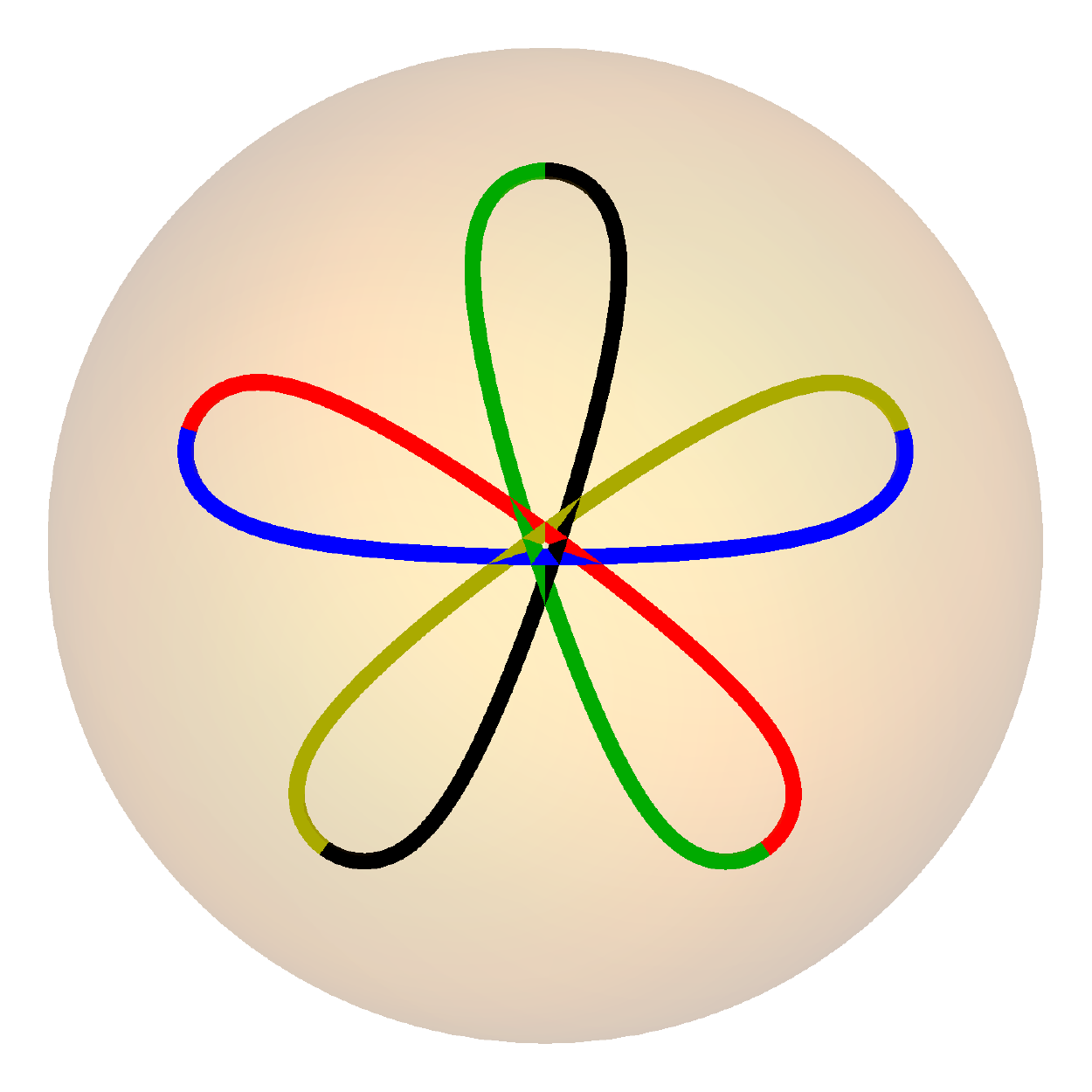}
\end{center}
\caption{Closed $p$-elastic curves for the values $l=3$ and $r=5$ for different values of $p=(n-2)/(n+1)$. From left to right: $n=4$, $n=15$ and $n=30$.}\label{Figure2}
\end{figure}

\section{Closed Rotational Biconservative Hypersurfaces}\label{5}

In this section, we use the results about $p$-elastic curves in $\mathbb{S}^2(\rho)$ to obtain rotational non-CMC biconservative hypersurfaces in spheres which are closed (i.e., compact without boundary).

Recall that from Theorem \ref{char}, locally, the profile curve $\gamma$ of a non-CMC biconservative rotational hypersurface $M^{n-1}$ is a $p$-elastic curve where $p=(n-2)/(n+1)$. Although this result is local in nature it can be employed to obtain global results too. The result of Theorem \ref{char} holds as long as the curvature of the profile curve does not attain any critical points, i.e., $\kappa'(s)\neq 0$. From the geometric description of $p$-elastic curves in $\mathbb{S}^2(\rho)$, their curvature attains the maximum and the minimum of the curvature at isolated points and so, by continuity, we can extend this result to those points. Moreover, since a rotational hypersurface $M^{n-1}$ is the evolution of the profile curve $\gamma$ under the action of $O(n-1)$, it follows that $M^{n-1}$ is closed if and only if $\gamma$ is closed. Similarly, $M^{n-1}$ will be embedded if and only if $\gamma$ is a simple curve.

In our setting we first deduce from Proposition \ref{periodic} the nonexistence of non-CMC closed biconservative rotational hypersurfaces in $N^2(\rho)$ when $\rho\leq 0$.

\begin{prop} Let $M^{n-1}$ be a non-CMC closed biconservative rotational hypersurface of a space form $N^n(\rho)$. Then, $N^n(\rho)=\mathbb{S}^n(\rho)$ is the $n$-dimensional round sphere.
\end{prop}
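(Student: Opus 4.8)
The plan is to combine the structural result of Theorem~\ref{char} with the phase-plane analysis of Proposition~\ref{periodic}. The key observation is that being closed and compact is a strong condition that forces the profile curve to be, in particular, a closed curve, and closedness of a curve requires its curvature to be a periodic function. First I would recall the dictionary established in the preceding paragraph: since a rotational hypersurface $M^{n-1}$ is obtained by revolving its profile curve $\gamma$ under the action of $O(n-1)$, the hypersurface $M^{n-1}$ is closed (compact without boundary) if and only if $\gamma$ is a closed curve in $N^2(\rho)$. Thus the whole question reduces to a statement about the profile curve.

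Next I would invoke Theorem~\ref{char}: because $M^{n-1}$ is a non-CMC rotational biconservative hypersurface, its profile curve $\gamma$ is a $p$-elastic curve with $p=(n-2)/(n+1)$, and since $M^{n-1}$ is non-CMC the curvature of $\gamma$ is nonconstant. Moreover, as noted after the proof of Theorem~\ref{char}, the rotational invariance of the hypersurface forces the integration constant to satisfy $d>0$ (see \cite{DC-D,P}). Hence $\gamma=\gamma_d$ is precisely a $p$-elastic curve with nonconstant curvature and $d>0$, defined on its maximal domain, which is exactly the class of curves treated in Proposition~\ref{periodic}.

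Now I would run the elementary but decisive argument. Since $M^{n-1}$ is closed, $\gamma_d$ is a closed curve, and a closed curve necessarily has periodic curvature. By Proposition~\ref{periodic}, a $p$-elastic curve $\gamma_d$ with $d>0$ and nonconstant curvature, defined on its maximal domain, has periodic curvature \emph{if and only if} $N^2(\rho)=\mathbb{S}^2(\rho)$, i.e., $\rho>0$. Therefore $\rho>0$, which means the ambient space form is the round sphere $N^n(\rho)=\mathbb{S}^n(\rho)$, as claimed.

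The argument is essentially a bookkeeping of already-proven facts, so there is no serious computational obstacle; the only point requiring care is the logical chain connecting global compactness of $M^{n-1}$ to periodicity of the curvature of $\gamma_d$. The main subtlety is to confirm that the local characterisation of Theorem~\ref{char} genuinely applies to the globally closed hypersurface: one must argue, as in the opening paragraph of this section, that the isolated critical points of the curvature (the maxima and minima, where $\kappa'(s)=0$) do not obstruct the identification of $\gamma$ as a $p$-elastic curve, since the profile curve extends smoothly across these points by continuity. Once that is in place, Proposition~\ref{periodic} delivers the conclusion immediately, ruling out $\rho\leq 0$ and leaving only the sphere.
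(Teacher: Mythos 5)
Your argument is correct and follows essentially the same route as the paper, which likewise deduces the proposition directly from Proposition~\ref{periodic} via the chain: $M^{n-1}$ closed $\Leftrightarrow$ the profile curve is closed $\Rightarrow$ its (nonconstant, by the non-CMC hypothesis) curvature is periodic $\Rightarrow$ $N^2(\rho)=\mathbb{S}^2(\rho)$ by Proposition~\ref{periodic}. Your added care about the identification of the profile curve as a $p$-elastic curve across the isolated critical points of the curvature matches the discussion the paper gives at the start of Section~\ref{5}.
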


Furthermore, when $N^n(\rho)=\mathbb{S}^n(\rho)$, we obtain from Theorem \ref{closed} and Corollary \ref{cor} the following result.

\begin{thm}\label{claimed} For every $n\geq 3$, there exists a discrete biparametric family of closed non-CMC biconservative rotational hypersurfaces $M^{n-1}$ in the round sphere $\mathbb{S}^n(\rho)$. However, none of these hypersurfaces are embedded in $\mathbb{S}^n(\rho)$.
\end{thm}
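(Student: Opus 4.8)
The plan is to translate the entire statement into facts that have already been established for the profile curves, so that Theorem~\ref{claimed} becomes an essentially immediate corollary of Theorem~\ref{closed}, Corollary~\ref{cor}, and the correspondence between a rotational hypersurface and its profile curve. The key structural observation, recorded in the paragraph opening Section~\ref{5}, is the dictionary $M^{n-1}$ closed $\iff$ $\gamma$ closed, and $M^{n-1}$ embedded $\iff$ $\gamma$ simple; this is what reduces a statement about hypersurfaces in $\mathbb{S}^n(\rho)$ to a statement about curves in $\mathbb{S}^2(\rho)$. So first I would fix $n\geq 3$ and set $p=(n-2)/(n+1)$, which by Theorem~\ref{char} is the exponent for which biconservativity of a non-CMC rotational hypersurface is equivalent to its profile curve being $p$-elastic.

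Next I would invoke Theorem~\ref{closed}: for every co-prime pair $(l,r)$ with $r<2l<\sqrt{2}\,r$ there is a closed $p$-elastic curve $\gamma_{d_{l,r}}\subset\mathbb{S}^2(\rho)$, winding $l$ times around the pole with $r$ lobes. Since the admissible pairs $(l,r)$ form an infinite (discrete, two-parameter) family, this yields a discrete biparametric family of closed $p$-elastic curves. Revolving each such $\gamma_{d_{l,r}}$ under the action of $O(n-1)$ produces a rotational hypersurface $M^{n-1}_{l,r}\subset\mathbb{S}^n(\rho)$; by Theorem~\ref{char} it is biconservative, and since the curvature of $\gamma_{d_{l,r}}$ is nonconstant the mean curvature $H$ is nonconstant, so $M^{n-1}_{l,r}$ is genuinely non-CMC. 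Closedness of $\gamma_{d_{l,r}}$ gives closedness of $M^{n-1}_{l,r}$ through the dictionary above, establishing the existence part. For the non-embeddedness part, Corollary~\ref{cor} states that every one of these curves has self-intersections, hence is not simple, and the dictionary then forces $M^{n-1}_{l,r}$ to fail to be embedded.

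There are two small gaps to close carefully rather than a genuine obstacle. The first is that Theorem~\ref{char} is stated locally and assumes $\kappa'(s)\neq 0$, whereas the closed profile curve has isolated points (maxima and minima of the curvature) where $\kappa'$ vanishes; I would note, exactly as in the opening paragraph of this section, that biconservativity extends across these isolated points by continuity, so the global hypersurface is biconservative everywhere. The second is to confirm that revolving a closed, non-self-intersecting-at-the-axis curve yields a genuine closed hypersurface (compact without boundary); here the geometric description following \eqref{param} is what I would cite, namely that $\gamma_{d_{l,r}}$ stays in the open half-sphere $x_1>0$ and never reaches the pole $(1/\sqrt{\rho},0,0)$, so the orbit $(n-2)$-spheres never degenerate and the revolved object is a smooth compact hypersurface.

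I expect the hardest conceptual point to be already discharged upstream: the real work lies in Theorem~\ref{closed} and Lemma~\ref{lemma}, whose limits $\sqrt{2}\,\pi$ and $\pi$ of $I(d)$ produce the crucial window $\pi<2\pi l/r<\sqrt{2}\,\pi$, and in Corollary~\ref{cor}, whose arithmetic obstruction $r<2<\sqrt{2}\,r$ rules out $l=1$ and thereby forbids simplicity. Granting those, the proof of Theorem~\ref{claimed} is a bookkeeping exercise: assemble the family from the admissible $(l,r)$, transport each property (non-CMC, biconservative, closed, non-embedded) across the profile-curve/hypersurface correspondence, and cite the continuity extension across critical points of $\kappa$. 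No new estimate or construction is needed at this final stage.
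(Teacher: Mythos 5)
Your proposal is correct and follows essentially the same route as the paper: the authors likewise reduce Theorem~\ref{claimed} to Theorem~\ref{closed} and Corollary~\ref{cor} via the dictionary ($M^{n-1}$ closed $\iff$ $\gamma$ closed, $M^{n-1}$ embedded $\iff$ $\gamma$ simple), extending Theorem~\ref{char} across the isolated critical points of $\kappa$ by continuity. The two ``small gaps'' you flag are exactly the points the paper addresses in the paragraph preceding the theorem, so nothing is missing.
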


More precisely, let $n\geq 3$ be fixed. Then, for every natural co-prime numbers $l$ and $r$ satisfying
$$r<2l<\sqrt{2}\,r$$
we have a closed non-CMC biconservative rotational hypersurface $M^{n-1}$ in $\mathbb{S}^n(\rho)$, whose profile curve winds $l$ times around the pole and has $r$ lobes.

\end{document}